\newtheorem{theorem}{Theorem}
\newtheorem{lemma}{Lemma}
\theoremstyle{remark} \newtheorem{remark}{Remark}}
\newtheorem{corollary}{Corollary}
\newcommand\bbR{\mathbb{R}}
\newcommand\bbC{\mathbb{C}}
\newcommand\bbZ{\mathbb{Z}}
\newcommand\bbS{\mathbb{S}}
\newcommand\bbM{\mathbb{M}}
\newcommand\bdf{\boldsymbol{f}}
\newcommand\bc{\boldsymbol{c}}
\newcommand\bx{\boldsymbol{x}}
\newcommand\by{\boldsymbol{y}}
\newcommand\bu{\boldsymbol{u}}
\newcommand\bv{\boldsymbol{v}}
\newcommand\bw{\boldsymbol{w}}
\newcommand\bn{\boldsymbol{n}}
\newcommand\bA{\boldsymbol{\mathrm{A}}}
\newcommand\bG{\boldsymbol{\mathrm{G}}}
\newcommand\bI{\boldsymbol{\mathrm{I}}}
\newcommand\bK{\boldsymbol{\mathrm{K}}}
\newcommand\cA{\mathcal{A}}
\newcommand\cE{\mathcal{E}}
\newcommand\cF{\mathcal{F}}
\newcommand\cI{\mathcal{I}}
\newcommand\cK{\mathcal{K}}
\newcommand\cJ{\mathcal{J}}
\newcommand\cT{\mathcal{T}}
\newcommand\imag{\boldsymbol{\mathrm i}}
\newcommand\dd{\,\mathrm{d}} %differential operator
\newcommand\norm[1]{\left\lVert#1\right\rVert}
\newcommand\normLpOmega[2]{\left\lVert#2\right\rVert_{L^{#1}(\Omega)}}
\newcommand\normOperatorLp[2]{\left\lVert#2\right\rVert_{L^{#1}(\Omega)\rightarrow L^{#1}(\Omega)}}
\newcommand\normLpOmegaAni[2]{\left\lVert#2\right\rVert_{L^{#1}(\Omega,\bbM)}}
\newcommand\normOperatorLpAni[2]{\left\lVert#2\right\rVert_{L^{#1}(\Omega,\bbM)\rightarrow L^{#1}(\Omega,\bbM)}}
\numberwithin{equation}{section}
\title{Fast algorithms for integral formulations of steady-state radiative transfer equation}
\author{
  Yuwei Fan\thanks{Department of Mathematics, Stanford University,
    Stanford, CA 94305, email: {\tt ywfan@stanford.edu}},~~
  Jing An\thanks{Institute for Computational and Mathematical
    Engineering, Stanford University, Stanford, CA 94305, email: {\tt
      jingan@stanford.edu}},~~
  Lexing Ying\thanks{Department of Mathematics and Institute for
    Computational and Mathematical Engineering, Stanford University,
    Stanford, CA 94305, email: {\tt lexing@stanford.edu}}
}
\begin{document}
\maketitle

\begin{abstract}
  We investigate integral formulations and fast algorithms for the steady-state radiative transfer
  equation with isotropic and anisotropic scattering. When the scattering term is a smooth
  convolution on the unit sphere, a model reduction step in the angular domain using the Fourier
  transformation in 2D and the spherical harmonic transformation in 3D significantly reduces the
  number of degrees of freedoms. The resulting Fourier coefficients or spherical harmonic
  coefficients satisfy a Fredholm integral equation of the second kind. We study the uniqueness of
  the equation and proved an a priori estimate.  For a homogeneous medium, the integral equation can be
  solved efficiently using the FFT and iterative methods. For an inhomogeneous medium, the recursive
  skeletonization factorization method is applied instead. Numerical simulations demonstrate the
  efficiency of the proposed algorithms in both homogeneous and inhomogeneous cases and for both transport
  and diffusion regimes.
  
  \vspace*{4mm}
  \noindent {\bf Keywords:} Fast algorithm; radiative transfer; Fredholm integral equation;
  recursive skeletonization; FFT; anisotropic scattering.
\end{abstract}

%=======================================================================================
\section{Introduction}

The radiative transfer equation (RTE) is the primary equation for describing particle propagation
in many different fields, such as neutron transport in reactor physics \cite{pomraning1973equations,
  duderstadt1979transport}, light transport in atmospheric radiative transfer \cite{marshak20053d},
heat transfer \cite{koch2004evaluation} and optical imaging
\cite{klose2002optical,tarvainen2005hybrid}.  In this paper, we focus on the steady state radiative
transfer equation with possibly anisotropic scattering
\begin{equation}\label{eq:RTE}
  \begin{aligned}
    \bv\cdot\nabla_{\bx}\Phi(\bx,\bv)+\mu_t(\bx)\Phi(\bx,\bv) 
    &=\mu_s(\bx)\fint_{\bbS^{d-1}}\hspace{-10pt}\sigma(\bx,\bv\cdot\bv')\Phi(\bx,\bv')\dd\bv'+f(\bx),\quad
    \text{ in } \Omega\times \bbS^{d-1},\\
    \Phi(\bx,\bv) &= 0,\quad \text{ on } \Gamma_{-},
  \end{aligned}
\end{equation}
where the quantity $\Phi(\bx,\bv)$ denotes the photon flux that depends on both space $\bx$ and
angle $\bv$, and $f(\bx)$ is the light source.  $\Omega\in\bbR^d$ is a bounded Lipschitz domain,
$\bbS^{d-1}$ is the unit sphere in $\bbR^d$, $\fint$ is the average integral, and
$\Gamma_{-}=\{(\bx,\bv)\in\partial\Omega\times \bbS^{d-1}: \bn(\bx)\cdot\bv<0\}$ with $\bn(\bx)$
being the outer unit normal vector at $\bx$. The scattering kernel $\sigma(\bx,\bv\cdot\bv')\geq0$
satisfies $\fint_{\bbS^{d-1}}\sigma(\bx,\bv\cdot\bv')\dd\bv'=1$.  The transport coefficient
$\mu_t=\mu_a+\mu_s$ measures the total absorption at $\bx$ due on both the physical absorption
quantified by the term $\mu_a$ and the scattering phenomenon quantified by the term $\mu_s$. To
simplify the discussion, we only consider the vacuum boundary condition and assume that the source
term $f$ is independent of the angular variable $\bv$. General boundary condition and source terms
will be discussed in Section \ref{sec:conclusion}.

Numerical methods for solving the RTE can be categorized into two groups: (a) the probabilistic
approaches, for example Monte Carlo methods \cite{bhan2007condensed, densmore2012hybrid,
  hayakawa2007coupled} and (b) the deterministic schemes based on differential-integral equations.
In the past decades, many deterministic methods have been proposed, including different
discretization in the spatial and angular domains
\cite{adams1998asymptotic,asadzadeh1998finite,manteuffel1999boundary,dedner2002adaptive,turpault2004multigroup,frank2007time,gao2009fast,grella2011sparse,alldredge2016approximating,ren2016fast}
and various preconditioned iterative schemes
\cite{oliveira1998preconditioned,patton2002application,godoy2012parallel}.

However, numerical solutions of RTE still face several challenging issues. One of them is the high
dimensionality of the photon flux $\Phi(\bx,\bv)$, which depends on both the spatial variable $\bx$
and the angular variable $\bv$. For a $d$-dimensional problem, the photon flux therefore depends on
a total of $d+(d-1)=2d-1$ variables. Another issue is the computation of the scattering term
\begin{equation}
  S(\bx,\bv) = \mu_s(\bx) \fint_{\bbS^{d-1}}\sigma(\bx,\bv\cdot\bv')\Phi(\bx,\bv') \dd\bv'.
\end{equation}
Due to the $(d-1)$-dimensional integral in the above formula, a naive computation of the RTE is a
$d+(d-1)+(d-1)=(3d-2)$-dimensional problem. This poses a significant bottleneck for efficient
numerical simulations of the RTE.

Recently in \cite{ren2016fast}, the authors studied the isotropic scattering case
(i.e. $\sigma(\bx,\bv\cdot\bv')\equiv 1$) and converted the RTE into a Fredholm integral equation of the
second kind of the {\em mean local density} $\fint_{\bbS^{d-1}}\Phi(\bx,\bv)\dd\bv$. One can then
directly solve the mean local density first without the solution of the photon flux $\Phi$. When
$\Phi(\bx,\bv)$ is needed, then it can be computed easily by solving a transport equation with a
known right hand side.

Inspired by the approach of \cite{ren2016fast}, in this work we propose integral formulations and
fast algorithms for the RTE with smooth isotropic or anisotropic scattering. The primary idea of the
algorithm is that the angular dependence of the scattering term $S(\bx,\bv)$ can be represented
efficiently using a small number of modes. One can convert the RTE into a Fredholm equation of the
second kind of these modes, thus achieving a significant model reduction. As in \cite{ren2016fast},
once the scattering term $S(\bx,\bv)$ is resolved, applying existing methods (e.g.
\cite{gao2009fast,grella2011sparse}) to the following transport equation with the right hand side
known gives the photon flux $\Phi(\bx,\bv)$
\begin{equation}
  \begin{aligned}
    \bv\cdot\nabla_{\bx}\Phi(\bx,\bv)+\mu_t(\bx)\Phi(\bx,\bv) &=S(\bx,\bv)+f(\bx),\quad
    \text{ in }\Omega\times \bbS^{d-1},\\
    \Phi(\bx,\bv) &= 0,\quad \text{ on } \Gamma_{-}.
  \end{aligned}
\end{equation}
%In solving the scattering term, we simultaneously obtain the mean local density
%$\fint_{\bbS^{d-1}}\Phi(\bx,\bv)\dd\bv$, which perhaps is enough for some applications. Thus,
In the rest of this paper, we focus on how to formulate an integral equation for the scattering term
$S(\bx,\bv)$ and how to solve it numerically.

For the isotropic scattering case, a Fredholm integral equation of the second kind of the mean local
density can be directly obtained from the RTE \cite{LionsVol6,ren2016fast}. We study the solvability
of the integral equation based on the contraction principle and prove an a priori estimate of the
solution. The numerical approach for solving the integral equation depends on whether the problem is
spatially constant or not (i.e. whether the coefficients $\mu_t$ and $\mu_s$ independent on
$\bx$). For a homogeneous medium, the Fredholm kernel is simply a convolution, so the FFT can
be used to evaluate the integral. Therefore, the Fredholm equation can be solved by combining
the FFT with a standard iterative method such as MINRES \cite{paige1975minres} and GMRES
\cite{saad1986gmres}.  For an inhomogeneous medium, we use the recursive skeletonization factorization
(RSF) \cite{ho2016hif} method to factorize the dense linear system obtained from discretizing the
integral equation. Due to the special factorization forms used in the RSF, this method is both fast
and accurate.

For the anisotropic scattering case, noticing that the smooth anisotropic scattering term is a convolution
on the unit sphere, we apply the Fourier transform ($d=2$) and the spherical harmonic transform
($d=3$) with respect to $\bv$ to both $\sigma$ and $\Phi$. After truncating the expansion of
$\sigma$ due to its smoothness, we obtain a Fredholm integral equation of the second kind of the
truncated expansion coefficients of $\Phi$. Once solving this integral equation, one obtains the
expansion coefficients of $\Phi$ in the angular variable $\bv$ and thus the scattering term. Similar
to the isotropic case, we study the solvability of the Fredholm equation by the contraction
principle and proved an a priori estimate. Both the FFT-based method and the RSF-based method are
discussed for the homogeneous and inhomogeneous cases, respectively.
Numerical simulations are performed to demonstrate the efficiency of both the FFT-based method and
the RSF-based method.

%Numerical simulations for both the isotropic and anisotropic scattering cases are performed to
%demonstrate the efficiency of the algorithms proposed in this paper.

The rest of the paper is organized as follows. Section \ref{sec:alg1} addresses the isotropic
scattering case, where we review the Fredholm integral equation of the mean local density, prove the
uniqueness of the Fredholm equation, construct the FFT-based method and the RSF-based method, and
report numerical studies. The anisotropic scattering case is studied in Section
\ref{sec:alg2}. %Numerical simulation results are presented in Section \ref{sec:num}.

%The paper ends with a conclusion in Section \ref{sec:conclusion}.

%=======================================================================================
\section{Isotropic scattering}\label{sec:alg1}
This section studies the isotropic scattering case, i.e.  $\sigma(\bx,\bv\cdot\bv')\equiv 1$. We show the
properties of the RTE, derive the Fredholm equation of the scattering term, construct the fast
algorithms and report numerical studies.

%The next section will focus on the structure of the anisotropic scattering and extend all the
%results to anisotropic case.
%They presented an idea to solve the RTE \eqref{eq:RTE} by reformulating it into a Fredholm integral
%equation of second kind.
%% In \cite{ren2016fast}, the authors studied this case by using the fast multipole method. Her we
%% follow and study the solvability of the Fredholm equation and construct fast algorithms for the
%% Fredholm equation derived from the RTE with isotropic scattering.

%==========================================
\subsection{Integral formulation}\label{sec:iso_integral}
The integral formulation of the RTE has been studied in the literatures for example in
\cite{case1963existence,LionsVol6,egger2014lp,ren2016fast}. Here for completeness, we provide a
brief review of the integral equation of the RTE in the following, and refer readers to
\cite{LionsVol6} for more details of the derivation.

Define the operators $\cT$ and $\cA$ as
\begin{equation}\label{eq:iso_def_TA}
  \cT \Psi := \bv\cdot\nabla_{\bx}\Psi+\mu_t\Psi,\quad \cA \Psi (\bx)
  :=\fint_{\bbS^{d-1}}\Psi(\bx,\bv)\dd\bv,
\end{equation}
then the RTE \eqref{eq:RTE} can be reformulated as
\begin{equation}\label{eq:RTE_iso_operator}
  \cT\Phi-\mu_s\cA\Phi=f.
\end{equation}
By using the method of characteristics, we obtain
\begin{equation}\label{eq:def_cTinv}
  (\cT^{-1}\Psi) (\bx,\bv) = \int_0^{\tau(\bx,\bv)} 
  \exp\left(-\int_0^s \mu_t\left(\bx - s'\bv\right) ds'\right)
  \Psi(\bx-s\bv,\bv) \dd s,
\end{equation}
where $\tau(\bx,\bv)$ is the distance of a particle traveling from $\bx$ to the domain boundary with
the velocity $-\bv$
\begin{equation}\label{eq:def_tau}
    \tau(\bx,\bv) = \sup\{t: \bx - s\bv \in \Omega \text{ for } 0\leq s< t \}.
\end{equation}
Define the operator $\cJ: \Omega\rightarrow \Omega\times\bbS^{d-1}$ as $\cJ g(\bx,\bv) = g(\bx)$ and
introduce the mean local density as
\begin{equation}\label{eq:iso_density}
  u(\bx) = \cA\Phi(\bx) = \fint_{\bbS^{d-1}}\Phi(\bx,\bv')\dd\bv'.
\end{equation}
The equation \eqref{eq:RTE_iso_operator} can be reformulated as a Fredholm integral equation of the
second kind with the form
\begin{equation}\label{eq:iso_Fredholm}
    u(\bx) = \cK u(\bx) + \tilde{\cK}f(\bx),
    %u(\bx)=\cA\cT^{-1}\mu_s(\bx)u(\bx)+\cA\cT^{-1}f(\bx).
\end{equation}
where
\begin{equation}\label{eq:iso_def_K}
    \cK = \cA\cT^{-1}\cJ\mu_s,\quad \tilde{\cK}=\cA\cT^{-1}\cJ.
\end{equation}
Notice that
\begin{equation}\label{eq:iso_derivation}
  \begin{aligned}
    (\tilde{\cK}\phi)(\bx) &= \fint_{\bbS^{d-1}}
    \int_0^{\tau(\bx,\bv)} 
    \exp\left(-\int_0^s \mu_t\left(\bx - s'\bv\right) ds'\right)
    \phi(\bx-s\bv) \dd s\dd\bv\\
    %&=\frac{1}{|\bbS^{d-1}|}\int_{\Omega}
    %\frac{1}{|\bx-\by|^{d-1}}
    %\exp\left(-\int_0^{|\bx-\by|} \mu_t\left(\bx -
    %s'\frac{\bx-\by}{|\bx-\by|}\right) ds'\right)
    %\phi(\by)\dd\by\\
    &=\frac{1}{|\bbS^{d-1}|}\int_{\Omega}
    \frac{1}{|\bx-\by|^{d-1}}
    \exp\left(-|\bx-\by|\int_0^1 \mu_t\left(\bx -
    s'(\bx-\by)\right) ds'\right) \phi(\by)\dd\by.
  \end{aligned}
\end{equation}
We have 
\begin{equation}\label{eq:iso_operator2}
  (\tilde{\cK} \phi)(\bx) = \int_{\Omega}\tilde{K}(\bx,\by)\phi(\by)\dd\by,
  \quad
  \tilde{K}(\bx,\by) = \frac{1}{|\bbS^{d-1}|} \frac{E(\bx,\by)}{|\bx-\by|^{d-1}},
\end{equation}
and
\begin{equation}\label{eq:iso_operator}
  (\cK \phi)(\bx) = \int_{\Omega}K(\bx,\by)\phi(\by)\dd\by,
  \quad 
  K(\bx,\by) = \frac{\mu_s(\by)}{|\bbS^{d-1}|}
  \frac{E(\bx,\by)}{|\bx-\by|^{d-1}},
\end{equation}
with $E(\bx,\by) = \exp\left(-|\bx-\by|\int_0^1 \mu_t\left(\bx - s'(\bx-\by)\right) ds'\right)$.
Introducing $\tilde{u}(\bx)=\mu_s(\bx)u(\bx)$ allows one to rewrite the integral equation
\eqref{eq:iso_Fredholm} as
\begin{equation}\label{eq:iso_Fredholm2}
  \tilde{u}(\bx)=\mu_s(\bx) (\tilde{\cK}\tilde{u})(\bx)
  +\mu_s(\bx)(\tilde{\cK}f)(\bx),
\end{equation}
or
\begin{equation}\label{eq:iso_Fredholm2_2}
  \frac{\tilde{u}(\bx)}{\mu_s(\bx)}=(\tilde{\cK}\tilde{u})(\bx)
  +(\tilde{\cK}f)(\bx) \tag{\ref{eq:iso_Fredholm2}'}.
\end{equation}
The advantage of the formulation \eqref{eq:iso_Fredholm2_2} is the symmetry of the kernel
$\tilde{K}(\bx,\by)$, i.e $\tilde{K}(\bx,\by)=\tilde{K}(\by,\bx)$, which can be used to save memory
and calculation in numerical method.

%To solve the RTE \eqref{eq:RTE}, we can first solve $u(\bx)$ by
%\eqref{eq:iso_Fredholm} or \eqref{eq:iso_Fredholm2}, and then
%solve 
%\begin{equation}
%    \begin{aligned}
%        \bv\cdot\nabla_{\bx}\Phi(\bx,\bv)+\mu_t(\bx)\Phi(\bx,\bv) = g(\bx),
%        \text{ in } \Omega\times \bbS^{d-1},\\
%        \Phi(\bx,\bv) &= 0,\quad \text{ on } \Gamma_{-},
%    \end{aligned}
%\end{equation}
%by some existing method, for example the fast sweeping method in
%\cite{gao2009fast}. 
%Moreover, in a lot of applications, people only care about the
%mean density $u(\bx)$. Thus, in this section, we focus on the
%Fredholm equations \eqref{eq:iso_Fredholm} and
%\eqref{eq:iso_Fredholm2}.

%==========================================
\subsection{Existence theory}

The solvability of the RTE \eqref{eq:RTE} was studied in the past decades \cite{case1963existence,
  LionsVol6, choulli1999,stefanov2008inverse, schlottbom2011forward, egger2014lp}. Dautray and Lions
in \cite{LionsVol6} developed a rather complete $L^p$ theory for the RTE in the framework of
semi-group theory. However, its a priori estimates for the solution are based on a rather strong
assumption on $\mu_s$ and $\mu_t$. In \cite{stefanov2008inverse}, Stefanov and Uhlmann studied the
existence of solutions of RTE based on the compactness argument and the Fredholm theory. Based on
the contraction principle, Egger and Schlottbom presented a $L^p$ theory for RTE with relaxed
conditions on the parameters $\mu_t$, $\mu_s$, and $\sigma$ and also gave a sharp a priori estimate
of the solution in \cite{schlottbom2011forward, egger2014lp, egger2014stationary}. The aim of this
subsection is to translate the work of the RTE to the Fredholm equation case
\eqref{eq:iso_Fredholm}, study the properties of the operators $\cK$ and $\tilde{\cK}$ along with
the solvability of the Fredholm equations \eqref{eq:iso_Fredholm}, and present an a priori estimate
of the solution. We start from the following assumptions:
\begin{enumerate}
\item[(C1)] $\Omega\in\bbR^d$ is a bounded Lipschitz domain;
\item[(C2)] $\mu_t, \mu_s:\Omega\rightarrow\bbR$ are non-negative and bounded on $\Omega$, and
  $\mu_s(\bx)\leq \mu_t(\bx)$.
\end{enumerate}
The main analytic result is the following theorem.

\begin{theorem}\label{thm:existence}
    Under assumptions (C1) and (C2) hold, for any $1\leq p\leq\infty$ and $f\in L_p(\Omega)$,
    the Fredholm equation \eqref{eq:iso_Fredholm} admits a unique solution $u\in L^p(\Omega)$ which
    satisfies
    \begin{equation}\label{eq:iso_unique}
      \normLpOmega{p}{u}
      \leq \tau \exp\left( \tau\normLpOmega{\infty}{\mu_s} \right)
      \normLpOmega{p}{f},
    \end{equation}
    where $\tau=\sup_{\bx\in\Omega,\bv\in\bbS^{d-1}}\tau(\bx,\bv)$.
\end{theorem}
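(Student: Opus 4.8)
The plan is to read \eqref{eq:iso_Fredholm} as the fixed-point equation $u=\cK u+\tilde{\cK}f$, to invert $\mathrm{I}-\cK$ by the contraction principle, and to extract the a priori bound \eqref{eq:iso_unique} from the resulting Neumann series. First I would record the mapping properties of the two kernels in \eqref{eq:iso_operator2} and \eqref{eq:iso_operator}. Since $\mu_t\ge 0$, the weight $\exp(-\int_0^s\mu_t)$ is $\le 1$, so the characteristic representation \eqref{eq:iso_derivation} gives $(\tilde{\cK}\mathbf 1)(\bx)=\fint_{\bbS^{d-1}}\int_0^{\tau(\bx,\bv)}\exp(-\int_0^s\mu_t(\bx-s'\bv)\dd s')\,\dd s\,\dd\bv\le\tau$, i.e. the row sums of $\tilde K$ are $\le\tau$; together with the symmetry $\tilde K(\bx,\by)=\tilde K(\by,\bx)$ the Schur test yields $\|\tilde{\cK}\|\le\tau$ on every $L^p(\Omega)$, $1\le p\le\infty$. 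This already controls the data term, $\normLpOmega{p}{\tilde{\cK}f}\le\tau\normLpOmega{p}{f}$.

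The heart of the argument is a sharp pointwise bound on $\cK\mathbf 1$. Writing $\cK\mathbf 1(\bx)=\fint_{\bbS^{d-1}}\int_0^{\tau(\bx,\bv)}\mu_s(\bx-s\bv)\exp(-\int_0^s\mu_t(\bx-s'\bv)\dd s')\,\dd s\,\dd\bv$ and invoking (C2) twice — first $\mu_t\ge\mu_s$ in the exponent, so that $\exp(-\int_0^s\mu_t)\le\exp(-\int_0^s\mu_s)$, then the exact integrating factor $\int_0^{\tau(\bx,\bv)}\mu_s\exp(-\int_0^s\mu_s)\dd s=1-\exp(-\int_0^{\tau(\bx,\bv)}\mu_s)$ — I obtain the telescoping estimate $\cK\mathbf 1(\bx)\le 1-\exp(-\tau\normLpOmega{\infty}{\mu_s})=:q<1$. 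Because $\cK$ has a nonnegative kernel, this gives $\|\cK\|_{L^\infty(\Omega)\to L^\infty(\Omega)}\le q$, so $\mathrm{I}-\cK$ is a contraction on $L^\infty(\Omega)$ with $\|(\mathrm{I}-\cK)^{-1}\|\le(1-q)^{-1}=\exp(\tau\normLpOmega{\infty}{\mu_s})$. Combined with the $\tilde{\cK}$ bound this is exactly \eqref{eq:iso_unique} for $p=\infty$, and existence and uniqueness for $p=\infty$ follow from Banach's theorem.

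To reach every $p$ I would dualize the same mechanism. The adjoint acts as $\cK^*g=\mu_s\,\tilde{\cK}g$, and the identical telescoping, now with the weight $\mu_s$ in place of the constant $\mathbf 1$, gives $\cK^*\mu_s\le q\,\mu_s$ pointwise; this controls $\mathrm{I}-\cK$ on $L^1(\Omega)$ through a $\mu_s$-weighted sup-norm and duality, again with constant $\exp(\tau\normLpOmega{\infty}{\mu_s})$. The intermediate range $1<p<\infty$ then follows by interpolating the resolvent bound between the $L^1$ and $L^\infty$ endpoints, and assembling $u=(\mathrm{I}-\cK)^{-1}\tilde{\cK}f$ yields $\normLpOmega{p}{u}\le\exp(\tau\normLpOmega{\infty}{\mu_s})\cdot\tau\,\normLpOmega{p}{f}$, which is \eqref{eq:iso_unique}.

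The step I expect to be the main obstacle is making the invertibility uniform in $p$. The $L^\infty$ contraction is clean, but it does not transfer directly to small $p$: the column sums of $K$ are only bounded by $\tau\normLpOmega{\infty}{\mu_s}$, so $\|\cK\|_{L^1(\Omega)\to L^1(\Omega)}$ may exceed $1$ and $\cK$ need not be a contraction there. Consequently the two endpoints require different comparison functions — $\mathbf 1$ for $L^\infty$ and $\mu_s$ for $L^1$ — and the delicate part is to arrange the weighted estimates so that both endpoints, and hence the interpolated estimate for every $p$, land on the single constant $\tau\exp(\tau\normLpOmega{\infty}{\mu_s})$; the degeneracy where $\mu_s$ vanishes must also be accommodated in the weighted $L^1$ bound.
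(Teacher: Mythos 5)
Your $p=\infty$ argument coincides with the paper's Lemma~\ref{lem:Linfcontraction}: the same integrating-factor telescoping gives $\cK\mathbf 1\le C=1-\exp(-\tau\normLpOmega{\infty}{\mu_s})$, hence $\normOperatorLp{\infty}{(\cI-\cK)^{-1}}\le\exp(\tau\normLpOmega{\infty}{\mu_s})$, and the bound $\normLpOmega{\infty}{\tilde{\cK}f}\le\tau\normLpOmega{\infty}{f}$ closes that case. The problem is the range $1\le p<\infty$, and the obstacle you flag at the end is not a technicality to be "accommodated" --- it is a genuine gap that your route does not close. The pointwise estimate $\cK^*\mu_s\le C\mu_s$ makes $\cK^*$ a contraction only for the weighted sup-norm $\sup_{\bx}|g(\bx)|/\mu_s(\bx)$, whose predual is $L^1(\mu_s\,\dd\bx)$, not $L^1(\Omega)$. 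Under (C2) the coefficient $\mu_s$ is merely non-negative and bounded; it may vanish on a set of positive measure (or have infimum zero), so the weighted norm does not control the unweighted one, and even when $\mu_s\ge c>0$ the passage back to $L^1(\Omega)$ costs a factor $\normLpOmega{\infty}{\mu_s}/c$ that destroys the clean constant in \eqref{eq:iso_unique}. Without a valid unweighted $L^1$ endpoint, the interpolation step collapses as well; and even granting both endpoints, Stein--Weiss interpolation between a $\mu_s$-weighted $L^1$ bound and an unweighted $L^\infty$ bound lands in intermediate \emph{weighted} $L^p$ spaces, not in $L^p(\Omega)$ with the single constant $\tau\exp(\tau\normLpOmega{\infty}{\mu_s})$.

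The missing idea is the paper's change of unknown. Rather than inverting $\cI-\cK$ on $L^p(\Omega)$, set $w=\mu_s u+f$, so that \eqref{eq:iso_Fredholm} becomes $w=\mu_s\tilde{\cK}w+f$ with $u=\tilde{\cK}w$ (equation \eqref{eq:iso_Fredholm_w}). The operator $\mu_s\tilde{\cK}$ is exactly the adjoint $\cK^*$ you identified, but the paper uses it as the \emph{primary} operator on the unweighted space: Lemma~\ref{lem:Lpcontranction} shows $\normOperatorLp{p}{\mu_s\tilde{\cK}}\le C$ for every $1\le p<\infty$ directly from \eqref{eq:mu_inequality} (for $p=1$ this is just your column-sum bound $\tilde{\cK}\mu_s\le C$), with no weight and no positivity assumption on $\mu_s$. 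Banach's fixed point theorem then gives $\normLpOmega{p}{w}\le(1-C)^{-1}\normLpOmega{p}{f}=\exp(\tau\normLpOmega{\infty}{\mu_s})\normLpOmega{p}{f}$, and the a priori bound \eqref{eq:iso_unique} follows from $\normLpOmega{p}{u}=\normLpOmega{p}{\tilde{\cK}w}\le\tau\normLpOmega{p}{w}$, which is your Schur-test estimate for $\tilde{\cK}$. In short: you have all the correct ingredients (the telescoping bound, the adjoint $\mu_s\tilde{\cK}$, the $\tau$-bound on $\tilde{\cK}$), but you should reassemble them via the substitution $w=\mu_s u+f$ instead of via weighted duality and interpolation.
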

The proof is divided into two cases: $p=\infty$ and $1\leq p<\infty$. For the case $p=\infty$, the
following result holds.
\begin{lemma}\label{lem:Linfcontraction}
  Under assumptions (C1) and (C2), the linear operator $\cK: L^\infty(\Omega)\rightarrow
  L^\infty(\Omega)$ is a contraction with
  \begin{equation}\label{eq:cKLinf}
    \normOperatorLp{\infty}{\cK} \leq C := 1 - \exp\left( -\tau \normLpOmega{\infty}{\mu_s} \right).
  \end{equation}
    Assume $f\in L^\infty(\Omega)$, then
    \begin{equation}\label{eq:KfLinf}
      \normLpOmega{\infty}{\tilde{\cK}f}\leq \tau\normLpOmega{\infty}{f}.
    \end{equation}
\end{lemma}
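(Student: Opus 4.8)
The plan is to work directly from the method-of-characteristics representation \eqref{eq:def_cTinv} of $\cT^{-1}$. Applying $\cA$ and $\cJ\mu_s$ to it gives, for $\phi\in L^\infty(\Omega)$,
\begin{equation*}
  (\cK\phi)(\bx) = \fint_{\bbS^{d-1}}\int_0^{\tau(\bx,\bv)} e^{-g(s)}\,\mu_s(\bx-s\bv)\,\phi(\bx-s\bv)\dd s\dd\bv,
  \qquad g(s):=\int_0^s\mu_t(\bx-s'\bv)\dd s'.
\end{equation*}
Pulling $\normLpOmega{\infty}{\phi}$ out of the integral reduces the operator-norm bound \eqref{eq:cKLinf} to the purely pointwise estimate that the inner double integral of $e^{-g(s)}\mu_s(\bx-s\bv)$ is at most $C$, uniformly in $\bx$. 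The analogous representation for $\tilde{\cK}$ (without the $\mu_s$ factor) reduces \eqref{eq:KfLinf} to bounding the integral of $e^{-g(s)}$ alone.

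The key observation, and the one place where assumption (C2) is used, is that the integrand is essentially a total derivative. Writing $h(s):=\int_0^s\mu_s(\bx-s'\bv)\dd s'$, so that $h'(s)=\mu_s(\bx-s\bv)$, the hypothesis $\mu_s\le\mu_t$ gives $h(s)\le g(s)$, hence $e^{-g(s)}\le e^{-h(s)}$; multiplying by $\mu_s(\bx-s\bv)\ge0$ and integrating,
\begin{equation*}
  \int_0^{\tau(\bx,\bv)} e^{-g(s)}\mu_s(\bx-s\bv)\dd s
  \le \int_0^{\tau(\bx,\bv)} e^{-h(s)}h'(s)\dd s
  = 1-e^{-h(\tau(\bx,\bv))}.
\end{equation*}
Because $h(\tau(\bx,\bv))\le\tau\normLpOmega{\infty}{\mu_s}$ and $x\mapsto1-e^{-x}$ is increasing, the right-hand side is bounded by $C=1-\exp(-\tau\normLpOmega{\infty}{\mu_s})$. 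This bound is independent of $\bv$, so averaging over $\bbS^{d-1}$ preserves it, yielding $|(\cK\phi)(\bx)|\le C\normLpOmega{\infty}{\phi}$ and therefore $\normOperatorLp{\infty}{\cK}\le C<1$, i.e.\ $\cK$ is a contraction.

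The estimate \eqref{eq:KfLinf} on $\tilde{\cK}$ is then immediate and does not even need (C2): since $\mu_t\ge0$ we have $g(s)\ge0$, so $e^{-g(s)}\le1$ and $\int_0^{\tau(\bx,\bv)}e^{-g(s)}\dd s\le\tau(\bx,\bv)\le\tau$, which gives $|(\tilde{\cK}f)(\bx)|\le\tau\normLpOmega{\infty}{f}$ after averaging. The only genuinely delicate point is the first estimate: one must resist bounding $e^{-g(s)}$ crudely by $1$, which would produce the useless factor $\tau\normLpOmega{\infty}{\mu_s}$ instead of the contraction constant $C<1$, and instead exploit the exponential weight together with $\mu_s\le\mu_t$ to recognize the integrand as a derivative. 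I would also note that the finiteness of $\tau$, guaranteed by the boundedness of $\Omega$ in (C1), is exactly what makes $C$ strictly less than $1$.
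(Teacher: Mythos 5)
Your proof is correct and follows essentially the same route as the paper: both arguments reduce the operator bound to a pointwise estimate on $\int_0^{\tau(\bx,\bv)} E^{[\mu_t]}\mu_s\,\dd s$, use $\mu_s\le\mu_t$ to replace $E^{[\mu_t]}$ by $E^{[\mu_s]}$ (your $e^{-h(s)}$), and then recognize the integrand as the exact derivative $-\frac{\dd}{\dd s}e^{-h(s)}$ to obtain the constant $C=1-\exp(-\tau\normLpOmega{\infty}{\mu_s})$, with the bound \eqref{eq:KfLinf} following from $E^{[\mu_t]}\le 1$ exactly as in the paper.
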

\begin{proof}
  Let $E^{[\mu_t]}(\bx,\bv,s)=\exp\left( - \int_0^s\mu_t(\bx-s'\bv)\dd s'\right)$, then
  \begin{equation}\label{eq:relation_Emu}
    E^{[\mu_t]}(\bx+s\bv,\bv,s)=E^{[\mu_t]}(\bx,\bv,s).
  \end{equation}
  Recalling the definition of $\cK$ \eqref{eq:iso_def_K}, for $\phi(\bx)\in L^\infty(\Omega)$, we
  have
  \begin{align*}
    |(\cK \phi)(\bx) | 
    &= |(\tilde{\cK}(\mu_s\phi))(\bx)|
    \leq \normLpOmega{\infty}{\phi}|(\tilde{\cK}\mu_s)(\bx)| .
    %&= \left|\fint_{\bbS^{d-1}}  \int_0^{\tau(\bx,\bv)} 
    %E^{[\mu_t]}(\bx,\bv,s)
    %\mu_s(\bx-s\bv) \phi(\bx-s\bv) \dd s \dd \bv \right|\\
    %&\leq \normLpOmega{\infty}{\phi} \fint_{\bbS^{d-1}} \int_0^{\tau(\bx,\bv)} 
    %E^{[\mu_t]}(\bx,\bv,s) \mu_s(\bx-s\bv) \dd s \dd \bv.
  \end{align*}
  Since $\mu_t(\bx)\geq\mu_s(\bx)$ are bounded and non-negative, we obtain
  \begin{equation}\label{eq:mu_inequality}
    \begin{aligned}
      0\leq (\tilde{\cK}\mu_s)(\bx) &=
      \fint_{\bbS^{d-1}}  \int_0^{\tau(\bx,\bv)} E^{[\mu_t]}(\bx,\bv,s)      \mu_s(\bx-s\bv)  \dd s \dd \bv \\
      &\leq \fint_{\bbS^{d-1}}  \int_0^{\tau(\bx,\bv)} E^{[\mu_s]}(\bx,\bv,s)      \mu_s(\bx-s\bv)  \dd s \dd \bv \\
      &\leq \fint_{\bbS^{d-1}} \left(1-\exp\left(-\normLpOmega{\infty}{\mu_s}\tau(\bx,\bv)\right) \right)\dd \bv\leq C < 1.
    \end{aligned}
  \end{equation}
  Therefore $\normLpOmega{\infty}{\cK \phi(\bx)} \leq C\normLpOmega{\infty}{\phi} <
  \normLpOmega{\infty}{\phi}$ for any $\phi\in L^{\infty}(\Omega)$, i.e.  $\cK$ is a contraction in
  $L^\infty(\Omega)$, and $\normOperatorLp{\infty}{\cK}\leq C$.

  Recalling the definition of $\tilde{\cK}$ \eqref{eq:iso_def_K}, and noticing
  $0<E^{[\mu_t]}(\bx,\bv,s)\leq 1$, we have
  \begin{equation*}
    \begin{aligned}
      |\tilde{\cK}f|&\leq\left| \fint_{\bbS^{d-1}}\int_0^{\tau(\bx,\bv)} E^{[\mu_t]}(\bx,\bv,s)
      |f(\bx-s\bv')|\dd s\dd\bv\right|\\
      &\leq \fint_{\bbS^{d-1}}\int_0^{\tau(\bx,\bv)}|f(\bx-s\bv)|\dd s\dd\bv \leq \tau
      \normLpOmega{\infty}{f}.
    \end{aligned}
  \end{equation*}
\end{proof}
For the case $1\leq p<\infty$, notice that \eqref{eq:iso_Fredholm} can be rewritten as $u =
\tilde{\cK}\left( \mu_s u+f \right)$. By letting $w=\mu_s u+f$, the Fredholm equation
\eqref{eq:iso_Fredholm} can be stated equivalently as
\begin{equation}\label{eq:iso_Fredholm_w}
    w=\mu_s\tilde{\cK}w+f, \quad u = \tilde{\cK} w.
\end{equation}
\begin{lemma}\label{lem:Lpcontranction}
    Under assumptions (C1), (C2), the linear operator $\mu_s\tilde{\cK}: L^p(\Omega)\rightarrow
    L^p(\Omega)$, $1\leq p<\infty$ is a contraction with
    \begin{equation}\label{eq:cKLp}
      \normOperatorLp{p}{\mu_s\tilde{\cK}} \leq C,
    \end{equation}
    and 
    \begin{equation}\label{eq:uwLp}
      \normLpOmega{p}{u}\leq \tau\normLpOmega{p}{w}.
    \end{equation}
\end{lemma}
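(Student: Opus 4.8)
The plan is to prove the two assertions separately, reducing both to the characteristic representation of $\tilde\cK$ and to the pointwise bound $\tilde\cK\mu_s\le C$ already obtained in \eqref{eq:mu_inequality}. Throughout I would take $w\ge0$ without loss of generality (all kernels are nonnegative) and work from $(\tilde\cK w)(\bx)=\fint_{\bbS^{d-1}}\int_0^{\tau(\bx,\bv)}E^{[\mu_t]}(\bx,\bv,s)\,w(\bx-s\bv)\dd s\dd\bv$, together with the change of variables $\by=\bx-s\bv$ at fixed $(\bv,s)$: then $\dd\bx=\dd\by$, $w(\bx-s\bv)=w(\by)$, and $E^{[\mu_t]}(\bx,\bv,s)$ turns into $\exp(-\int_0^s\mu_t(\by+r\bv)\dd r)$.

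For the bound $\|u\|_{L^p(\Omega)}=\|\tilde\cK w\|_{L^p(\Omega)}\le\tau\|w\|_{L^p(\Omega)}$, I would first use $E^{[\mu_t]}\le1$, so that $(\tilde\cK w)(\bx)$ is the average of $w$ against the measure $\fint\int_0^{\tau}\dd s\dd\bv$ of total mass $\fint\tau(\bx,\bv)\dd\bv\le\tau$. Raising to the $p$-th power and applying Jensen's inequality against the normalized measure gives $|(\tilde\cK w)(\bx)|^p\le\tau^{p-1}\fint\int_0^{\tau}|w(\bx-s\bv)|^p\dd s\dd\bv$. Integrating in $\bx$, exchanging the order of integration, and using translation invariance (extend $w$ by zero so that $\int_\Omega|w(\bx-s\bv)|^p\dd\bx\le\|w\|_{L^p(\Omega)}^p$) collapses the $\bx$–integral to $\|w\|_{L^p(\Omega)}^p$ while the remaining $\fint\int_0^\tau\dd s\dd\bv\le\tau$ supplies the factor $\tau$, yielding $\|\tilde\cK w\|_{L^p(\Omega)}^p\le\tau^p\|w\|_{L^p(\Omega)}^p$. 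This part is routine.

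For the contraction, the case $p=1$ is immediate from the symmetry $\tilde K(\bx,\by)=\tilde K(\by,\bx)$: the $L^1\to L^1$ norm of $\mu_s\tilde\cK$ is $\sup_\by\int_\Omega\mu_s(\bx)\tilde K(\bx,\by)\dd\bx=\sup_\by(\tilde\cK\mu_s)(\by)\le C$ by \eqref{eq:mu_inequality}. For $1<p<\infty$ I would run the $L^p$–analogue of Lemma \ref{lem:Linfcontraction}: apply Hölder in the $(\bv,s)$ variables to $(\mu_s\tilde\cK w)(\bx)$, integrate in $\bx$, exchange orders via $\by=\bx-s\bv$, and use $\mu_s\le\mu_t$ (hence $E^{[\mu_t]}\le E^{[\mu_s]}$) together with the telescoping identity $\mu_s(\by+s\bv)\exp(-\int_0^s\mu_s(\by+r\bv)\dd r)=-\partial_s\exp(-\int_0^s\mu_s(\by+r\bv)\dd r)$ to reduce the ray integral to $1-\exp(-\|\mu_s\|_{L^\infty(\Omega)}\tau)=C$.

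The main obstacle is obtaining the \emph{sharp} constant $C$ uniformly in $p$. The naive Hölder split produces the factor $\big(\mu_s(\bx)(\tilde\cK\mathbf 1)(\bx)\big)^{1/p'}$, and this weight is \emph{not} bounded by $C$ pointwise (only by $\|\mu_s\|_{L^\infty(\Omega)}\tau$, because here $\mu_s$ sits at the near endpoint $\bx$ and cannot be telescoped against the decay), so a crude bound gives only the $p$-dependent constant $(\tau\|\mu_s\|_{L^\infty(\Omega)})^{1/p'}C^{1/p}$. Interpolation alone does not repair this: $\mu_s\tilde\cK$ is genuinely \emph{not} a contraction on $L^\infty(\Omega)$ (its $L^\infty\to L^\infty$ norm equals $\|\mu_s\,\tilde\cK\mathbf 1\|_{L^\infty(\Omega)}$, which can exceed $C$), so Riesz–Thorin between $L^1$ and $L^\infty$ returns the same mixed constant. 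The way I would try to close the gap is to pass to the adjoint $\cK=\tilde\cK\mu_s$ via the symmetry of $\tilde K$, using $\|\mu_s\tilde\cK\|_{L^p\to L^p}=\|\cK\|_{L^{p'}\to L^{p'}}$, so that it suffices to show $\|\cK\|_{L^q\to L^q}\le C$ for $1<q\le\infty$; for $\cK$ the coefficient $\mu_s$ naturally sits at the far endpoint $\bx-s\bv$ where the telescoping $\cK\mathbf 1=\tilde\cK\mu_s\le C$ is available. Making this estimate hold for finite $q$ (rather than only $q=\infty$, which is Lemma \ref{lem:Linfcontraction}) requires reconciling the along-characteristic telescoping with the $d$-dimensional geometric spreading of the kernel $|\bx-\by|^{-(d-1)}$ and the angular average, and I expect this to be the decisive step of the argument.
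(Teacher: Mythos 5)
Your proof of \eqref{eq:uwLp} (Jensen against the measure $\fint_{\bbS^{d-1}}\int_0^{\tau(\bx,\bv)}\dd s\dd\bv$ of total mass at most $\tau$, then Fubini and the substitution $\by=\bx-s\bv$) is correct and is essentially the argument the paper compresses into one line. Your $p=1$ case of \eqref{eq:cKLp} --- the column sums of $\mu_s(\bx)\tilde K(\bx,\by)$ equal $(\tilde{\cK}\mu_s)(\by)\le C$ by the symmetry of $\tilde K$ and \eqref{eq:mu_inequality} --- is also correct. The gap is that you do not establish the contraction for $1<p<\infty$: you reduce it by duality to $\normOperatorLp{q}{\cK}\le C$ for $1<q\le\infty$ and explicitly leave the ``decisive step'' open. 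Since the lemma asserts the bound for all $1\le p<\infty$, the proposal as written does not prove it.

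That said, the obstruction you isolate is genuine, and the paper's own proof does not overcome it either. The paper passes from $\int_\Omega\bigl(\mu_s(\bx)\fint\int_0^{\tau}E^{[\mu_t]}|\phi(\bx-s\bv)|\dd s\dd\bv\bigr)^p\dd\bx$ to $\int_\Omega\bigl(\fint\int_0^{\tau}E^{[\mu_t]}\mu_s(\bx-s\bv)\dd s\dd\bv\bigr)^p|\phi(\bx)|^p\dd\bx$ and calls this an \emph{equality} justified by \eqref{eq:relation_Emu}; for $p=1$ this is Fubini plus the change of variables along characteristics, but for $p>1$ the roles of $\mu_s$ and $\phi$ cannot be exchanged inside the $p$-th power, and the step is unjustified. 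In fact your observation that the near-endpoint weight $\mu_s(\bx)(\tilde{\cK}\mathbf 1)(\bx)$ is controlled only by $\tau\normLpOmega{\infty}{\mu_s}$ can be pushed to a counterexample for large $p$: take $\Omega=[0,1]^d$, $\mu_s=\mu_t$ a centered bump of height $M$ and width $1/M$, and $\phi\equiv 1$. Every ray leaving the bump accumulates attenuation at most $2$, so $(\tilde{\cK}\mathbf 1)\gtrsim 1$ on the bump, whence $\normLpOmega{p}{\mu_s\tilde{\cK}\mathbf 1}\gtrsim M\cdot M^{-d/p}=M^{1-d/p}\to\infty$ while $C<1$; thus \eqref{eq:cKLp} with the constant $C$ fails for every $p>d$. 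So no completion of your duality plan (or any other) can close the gap in the stated generality. What does survive is your $L^1$ bound, its dual $\normOperatorLp{\infty}{\cK}\le C$ (Lemma \ref{lem:Linfcontraction}), and by Riesz--Thorin the mixed bound $C^{1/p}\left(\tau\normLpOmega{\infty}{\mu_s}\right)^{1/p'}$, which yields a contraction only for $p$ sufficiently close to $1$ or under additional assumptions on $\mu_s$; if the downstream use at $p=2$ (Corollary \ref{cor:spd}) is what matters, that case should be argued separately, e.g.\ via the symmetry of $\tilde K$.
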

\begin{proof}
    Recalling the definition of $\tilde{\cK}$ \eqref{eq:iso_def_K}, for any $\phi\in L^p(\Omega)$,
    we have
    \begin{equation*}
        \begin{aligned}
            \normLpOmega{p}{\mu_s\tilde{\cK}\phi}^p &\leq
            \int_{\Omega}\left(\mu_s(\bx)
            \fint_{\bbS^{d-1}}  \int_0^{\tau(\bx,\bv)} 
            E^{[\mu_t]}(\bx,\bv,s)
            |\phi(\bx-s\bv)| \dd s \dd \bv\right)^p\dd\bx\\
            &= \int_{\Omega}\left(
            \fint_{\bbS^{d-1}}  \int_0^{\tau(\bx,\bv)} 
            E^{[\mu_t]}(\bx,\bv,s) \mu_s(\bx-s\bv)
            \dd s \dd \bv\right)^p|\phi(\bx)|^p\dd\bx\\
            &\leq C^p\normLpOmega{p}{\phi}^p<\normLpOmega{p}{\phi}^p,
        \end{aligned}
    \end{equation*}
    where the relation \eqref{eq:relation_Emu} and \eqref{eq:mu_inequality} are used in the equality
    and the last inequality, respectively. Therefore, $\mu_s\tilde{\cK}$ is a contraction in
    $L^p(\Omega)$ and $\normOperatorLp{p}{\mu_s\tilde{\cK}}\leq C$.

    The non-negativeness of $\mu_t$ indicates $0\leq E^{[\mu_t]}(\bx,\bv,s)\leq1$, thus we have
    \begin{equation*}
      \normLpOmega{p}{u}^p \leq \int_{\Omega} \left(\fint_{\bbS^{d-1}} \int_0^{\tau(\bx,\bv)}
      E^{[\mu_t]}(\bx,\bv,s)|w(\bx-s\bv)|\dd s\dd\bv\right)^p\dd\bx \leq \normLpOmega{p}{\tau w}^p.
    \end{equation*}
\end{proof}
With this two lemmas, the proof of the Theorem \ref{thm:existence} is rather straightforward.
\begin{proof}[Proof of Theorem \ref{thm:existence}]
  For the case $p=\infty$, noticing Lemma \ref{lem:Linfcontraction}, and applying Banach's fixed
  point theorem \cite[Chapter 3, Theorem 3.2]{hunter2001applied}, we obtain that
  \eqref{eq:iso_Fredholm} has a unique solution $u\in L^\infty(\Omega)$ if $\tilde{\cK}f\in
  L^\infty(\Omega)$, which is guaranteed by \eqref{eq:KfLinf}.
  
  Analogously, for the case $1\leq p<\infty$, noticing Lemma \ref{lem:Lpcontranction} and applying
  Banach's fixed point theorem, we obtain that \eqref{eq:iso_Fredholm_w} has a unique solution $w\in
  L^p(\Omega)$, which indicates \eqref{eq:iso_Fredholm} has a unique solution $u\in L^p(\Omega)$.
  The conclusion \eqref{eq:iso_unique} is a directly deduction of Lemma \ref{lem:Lpcontranction}.
  This completes the proof.
\end{proof}

By setting $p=2$ in Lemma \ref{lem:Lpcontranction}, we can directly deduce the result.
\begin{corollary}\label{cor:spd}
  Under assumptions (C1) and (C2), the linear operators $\cI-\cK$, $\cI-\mu_s\tilde{\cK}:$
  $L^2(\Omega)\rightarrow L^2(\Omega)$ are positive definite.
\end{corollary}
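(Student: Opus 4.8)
The plan is to deduce positive definiteness purely from the $L^2$ contraction bound already established, together with the Cauchy--Schwarz inequality. Recall that a bounded linear operator $A:L^2(\Omega)\to L^2(\Omega)$ with $\normOperatorLp{2}{A}\leq C<1$ automatically makes $\cI-A$ positive definite: writing $\langle f,g\rangle=\int_\Omega f g\dd\bx$ for the real $L^2$ inner product, for any nonzero $\phi\in L^2(\Omega)$ one has
\begin{equation*}
  \langle (\cI-A)\phi,\phi\rangle
  =\normLpOmega{2}{\phi}^2-\langle A\phi,\phi\rangle
  \geq \normLpOmega{2}{\phi}^2-\normOperatorLp{2}{A}\,\normLpOmega{2}{\phi}^2
  \geq (1-C)\normLpOmega{2}{\phi}^2>0,
\end{equation*}
where the first inequality is Cauchy--Schwarz. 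In fact this gives coercivity with constant $1-C>0$, which is the strong reading of positive definiteness. Thus it suffices to show that both $\cK$ and $\mu_s\tilde{\cK}$ have $L^2$ operator norm at most $C<1$.

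For $\mu_s\tilde{\cK}$ this is exactly the content of Lemma \ref{lem:Lpcontranction} with $p=2$, which yields $\normOperatorLp{2}{\mu_s\tilde{\cK}}\leq C$. For $\cK$ I would pass to the adjoint. The key observation is the symmetry $\tilde{K}(\bx,\by)=\tilde{K}(\by,\bx)$ noted after \eqref{eq:iso_Fredholm2_2}, which makes $\tilde{\cK}$ self-adjoint on $L^2(\Omega)$. Since $\cK=\tilde{\cK}\mu_s$ (that is, $\cK\phi=\tilde{\cK}(\mu_s\phi)$, the composition of multiplication by the real, bounded function $\mu_s$ followed by $\tilde{\cK}$) and multiplication by $\mu_s$ is self-adjoint, one obtains $\cK^{*}=\mu_s\tilde{\cK}$. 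An operator and its adjoint share the same operator norm, so $\normOperatorLp{2}{\cK}=\normOperatorLp{2}{\mu_s\tilde{\cK}}\leq C<1$ as well.

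Combining the two norm bounds with the displayed Cauchy--Schwarz estimate yields $\langle(\cI-\cK)\phi,\phi\rangle>0$ and $\langle(\cI-\mu_s\tilde{\cK})\phi,\phi\rangle>0$ for every nonzero $\phi$, which is the claim. Indeed, since $(\cI-\cK)^{*}=\cI-\mu_s\tilde{\cK}$, the two quadratic forms coincide on real functions, so either bound already implies the other. The only slightly delicate point is the adjoint bookkeeping: Lemma \ref{lem:Lpcontranction} controls $\mu_s\tilde{\cK}$ rather than $\cK$ directly, so one must invoke the self-adjointness of $\tilde{\cK}$ coming from the symmetric kernel to transfer the bound to $\cK$. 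Everything else is routine.
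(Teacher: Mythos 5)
Your proof is correct and follows essentially the same route as the paper's: both arguments rest on the symmetry of the kernel $\tilde{K}(\bx,\by)$ to identify $\cK^{*}=\mu_s\tilde{\cK}$, and then combine the $L^2$ contraction bound of Lemma \ref{lem:Lpcontranction} with the Cauchy--Schwarz inequality to get $\langle(\cI-\cK)\phi,\phi\rangle\geq(1-C)\normLpOmega{2}{\phi}^2$. The only cosmetic difference is that you transfer the bound to $\cK$ via the norm equality $\normOperatorLp{2}{\cK}=\normOperatorLp{2}{\cK^{*}}$, whereas the paper manipulates the quadratic form directly through $\langle\phi,\cK\phi\rangle=\langle\mu_s\tilde{\cK}\phi,\phi\rangle$; these are interchangeable.
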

\begin{proof}
  Since $\cK=\tilde{\cK}\mu_s$, and $\tilde{\cK}$ is a symmetric operator, for any $\phi\in
  L^2(\Omega)$, we have
  \begin{equation*}
      \langle\phi, \cK\phi\rangle 
      =\langle \tilde{\cK}\phi,\mu_s\phi\rangle
      =\langle \mu_s \tilde{\cK}\phi,\phi\rangle.
  \end{equation*}
  Using \eqref{eq:cKLp} with $p=2$ and H{\"o}lder inequality, we can
  obtain 
  \begin{equation*}
      \left\langle \left(\cI-\mu_s \tilde{\cK}\right)\phi,\phi\right\rangle \geq
      (1-C)\normLpOmega{2}{\phi}^2.
  \end{equation*}
  Since $C<1$, $\cI-\cK$ and $\cI-\mu_s\tilde{\cK}$ are both positive definite.
\end{proof}

%==========================================
\subsection{Fast algorithms}

In this subsection, we assume that $\mu_s$ has a positive lower bound and study the fast algorithms
for solving \eqref{eq:iso_Fredholm2_2}. These algorithms can also be applied to
\eqref{eq:iso_Fredholm}. Solving \eqref{eq:iso_Fredholm2} requires two steps:
\begin{enumerate}
\item[S.1] Evaluate $(\tilde{\cK}f)(\bx)$; 
\item[S.2] Given $r(\bx)$, solve $\left( \frac{\cI}{\mu_s(\bx)}
  -\tilde{\cK}\right)\tilde{u}(\bx)=r(\bx)$,
  or equivalently $\tilde{u}(\bx) = \left( \frac{\cI}{\mu_s(\bx)}-\tilde{\cK} \right)^{-1}r(\bx)$.
\end{enumerate}

To simplify the discussion, we assume that the domain $\Omega$ is rectangular. The discretization of
the Fredholm equation \eqref{eq:iso_Fredholm2_2} starts by partitioning the domain $\Omega$ into
Cartesian cells of equal volume. Let $N$ be the number of the cells and $\{\bx_i\}_{1\le i \le N}$
be the center of the cells. We introduce $\tilde{u}_i$ as an approximation of $\tilde{u}(\bx_i)$ and
define $\mu_{s,i}=\mu_s(\bx_i)$ and $f_i=f(\bx_i)$. Using the Nystr\"om method, the discretization
of \eqref{eq:iso_Fredholm2_2} takes the following form
\begin{equation}\label{eq:iso_discretization}
  \frac{\tilde{u}_i}{\mu_{s,i}}=\sum_{j=1}^N\tilde{K}_{i,j}\tilde{u}_j+\sum_{j=1}^N\tilde{K}_{i,j}f_j.
\end{equation}
When $i\not=j$, $\tilde{K}_{i,j}$ is equal to $\tilde{K}(\bx_i,\bx_j)$ scaled by the cell
volume. When $i=j$ since $\tilde{K}(\bx,\by)$ is singular at $\bx=\by$, the value $\tilde{K}_{i,i}$
is taken to be the integral of $\tilde{K}$ at the $i$-th cell.

Setting $\tilde{\bK}=\left( \tilde{K}_{ij} \right)_{N\times N}$ and
$\bA=\left(\frac{\delta_{ij}}{\mu_{s,i}}-\tilde{K}_{ij}\right)_{N\times N}$ allows one to rewrite
\eqref{eq:iso_discretization} in the following matrix form
\begin{equation}\label{eq:iso_AU}
  \bA \tilde{\bu} = \tilde{\bK}\bdf,
\end{equation}
where $\tilde{\bu}=\left( \tilde{u}_i \right)$ and $\bdf=\left( f_i \right)$. Corollary
\ref{cor:spd} guarantees the matrix $\bA$ is positive definite. Finally, in order to get the
approximation $\bu=\left(u_i\right)$ for the values of $u(\bx)$ at $\bx_i$, we simply set
$u_i = \tilde{u}_i/\mu_{s,i}$.

In the calculation of $\tilde{K}_{ij}$, there is a line integral in $E(\bx_i,\bx_j)$. The approach
of \cite{ren2016fast} uses the Fourier transform of $\mu_t$ to evaluate the integral. The
computational cost is proportional to the number of the Fourier modes used in the integral. Even
$\mu_t$ is quite smooth, this method can be somewhat inefficient in practice. Here we use the
Gauss-Legendre quadrature for the line integral. In all tests of this paper, the Gauss-Legendre
quadrature with merely $5$ points give sufficiently accurate results for smooth $\mu_t$.

The calculation of $\tilde{K}_{ii}$ requires evaluating an integral of a singular function. Noticing
that $\tilde{K}(\bx,\by) = \frac{1}{|\bbS^{d-1}||\bx-\by|^{d-1}} +
\frac{1}{|\bbS^{d-1}|}\frac{E(\bx,\by)-1}{|\bx-\by|^{d-1}}$, we first evaluate the mean of the first
part by analysis and then apply the Gauss-Legendre quadrature for the second (smooth)
part. Numerical test shows the Gauss-Legendre quadrature with $15$ points gives accurate results.

To solve \eqref{eq:iso_AU}, a typical direct method proceeds by constructing $\bA$, factorizing it
with either LU or Cholesky decomposition, and solving it with backward/forward substitution. The
computational cost of such a direct method scales like $O(N^3)$, where $N$ is the number of discrete
points of $\Omega$. This is rather costly where $N$ is large and in what follows we study fast
alternatives.

\subsubsection{Homogeneous media and FFT-based algorithm}\label{sec:isoFFT}
For a homogeneous medium, the total transport coefficient $\mu_t$ is independent on $\bx$. The kernel
\begin{equation}
  \tilde{K}(\bx,\by) = \frac{1}{|\bbS^{d-1}|}
  \frac{\exp\left(-\mu_t|\bx-\by|\right)}{|\bx-\by|^{d-1}} := \kappa(\bx-\by)
\end{equation}
only depends on $|\bx-\by|$ and this indicates that $\tilde{\cK} f(\bx)$ is a convolution. If the
domain $\Omega$ is a rectangle ($d=2$) or cuboid ($d=3$), the Fourier transformation can be used to
calculate the convolution more efficiently. More precisely, let us assume $\Omega=[0,1]^d$ and then
$\kappa(\bx)$ is defined on $[-1,1]^d$. One then extends $\kappa(\bx)$ periodically to the whole
space and redefines $\tilde{K}(\bx,\by)=\kappa(\bx-\by)$, $\bx,\by\in\bbR^d$.  For any function
$\phi(\bx)$, $\bx\in[0,1]^d$, extending it to $[-1,1]^d$ by padding zero (and still denoting it by
$\phi$), we obtain
\begin{equation}
  \tilde{\cK}\phi(\bx) = \int_{[-1,1]^d}\kappa(\bx-\by)\phi(\by)\dd\by,
\end{equation}
which can be evaluated by the Fourier transformation. If the discretization in
\eqref{eq:iso_discretization} is uniform, then the matrix $\tilde{\bK}=(\tilde{K}_{i,j})_{N\times
  N}$ is a circulant matrix, thus the FFT can be employed to evaluate $\tilde{\bK}\tilde{\bu}$ and
$\tilde{\bK}\bdf$, with the computation cost reduced to $O(N\log(N))$ with $N=n^d$.  Hence, the
application of the operators $\tilde{\cK}$ and $\frac{\cI}{\mu_s(\bx)}-\tilde{\cK}$ is highly
efficient and Step S.2 can be evaluated by a standard iterative method, for example MINRES
\cite{paige1975minres} and GMRES\cite{saad1986gmres}.

%Here we denote the tolerance of the iterative method as $\epsilon$.

Before the evaluation of $\tilde{\cK}\phi(\bx)$, one needs to calculate the Fourier modes of
$\kappa(\bx)$, which costs $O(N\log(N))$ computation steps and $O(N)$ storage. Therefore, the
precomputing cost of the algorithm is $O(N\log(N))$. The computation cost of the two steps S.1 and
S.2 are $O(N\log(N))$ and $O(n_{iter}N\log(N))$, respectively, where $n_{iter}$ the number of
iterations used in the iterative method. As a result, the total computation cost is
$O(n_{iter}N\log(N))$.

\begin{remark}
  The algorithm presented in this subsection can be applied to \eqref{eq:iso_Fredholm} without any
  difficulty. The differences are that we also have to assume $\mu_s$ is independent on $\bx$, and
  in solving Step S.2, the iterative method including MINRES does not work because the kernel
  $K(\bx,\by)$ is not symmetric.
\end{remark}

\subsubsection{Inhomogeneous media and RSF-based algorithm}\label{sec:isoRSF}

The FFT-based algorithm in Section \ref{sec:isoFFT} has a strong requirement that the total transport
coefficient $\mu_t$ is constant. Moreover, Step S.2 dominates the computation cost of the whole
algorithm. When the number of iterations is large or there are multiple right hand sides, the
iterative methods can be somewhat inefficient. The fast multipole based algorithm in
\cite{ren2016fast} shares the same issue since an iterative method is also used in Step S.2.

The recently proposed {\em recursive skeletonization factorization} (RSF) in \cite{ho2016hif-ie}
provides an alternative.  Building on top of earlier work in \cite{greengard2009fast, ho2012fast},
the RSF constructs a multiplicative factorization of the matrix operator using a sparsification
technique called {\em interpolative decomposition} recursively. This representation enables
surprisingly simple algorithms to apply $\bA$ and $\bA^{-1}$ directly. This makes it rather easy
to solve \eqref{eq:iso_AU}. RSF is based on elimination, so it requires that certain intermediate
matrices be invertible, which is guaranteed by Corollary \ref{cor:spd}. For more details of RSF, we
refer the readers to \cite[Section 3]{ho2016hif-ie}.

When the matrix $\bA$ is factorized recursively, the RSF takes advantage of the low-rank behavior of
the off-diagonal entries of the matrix and thus avoids visiting all entries of $\bA$. More precisely
from \cite{ho2016hif-ie}, for a Fredholm integral equation of the second kind, the RSF only visits
$O(N\log(N))$ entries of $\bA$. The cost $t_f$ of factorizing $\bA$ and the cost $t_{a/s}$ of
evaluating $\bA\bdf$ as well as $\bA^{-1}\bdf$ are respectively given by
\begin{equation} \label{eq:time}
  t_{f} = 
  \begin{cases}
    O(N^{3/2}), & d=2\\
    O(N^2),    & d=3,
  \end{cases}
  \qquad
  t_{a/s} =
  \begin{cases}
    O(N\log N),  & d=2,\\
    O(N^{4/3}),  & d=3.\\
  \end{cases}
\end{equation}
Thus, the precomputation cost of the algorithm is $t_f$, and the costs of Steps S.1 and S.2 are
both $t_{a/s}$, with no iterations involved.

As we pointed out, the FFT-based algorithm has strong requirements on the total transport coefficient
$\mu_t$ (i.e., constant) and the domain $\Omega$, while the RSF-based works for any $\mu_t$ and
domain $\Omega$. However, even for the case of constant $\mu_t$, sometimes it is preferred to use
the RSF-based algorithm.  The nice feature of the RSF-based algorithm is that, for any additional
right hand side, the extra cost is merely $t_{a/s}$. Therefore, when one needs to solve
\eqref{eq:iso_Fredholm2} for many different source terms $f$, the RSF-based algorithm shows its
advantage.

\begin{remark}
  Since the kernel $\tilde{K}(\bx,\by)$ is symmetric, the matrix $\tilde{\bA}$ is symmetric as well.
  RSF will use this property to save storage memory and factorization time (save nearly half memory
  and time \cite{ho2016hif}).  The RSF-based algorithm can be directly applied on
  \eqref{eq:iso_Fredholm}. However, due to the lack of symmetry in the kernel $K(\bx,\by)$, RSF has
  to factorize a non-symmetric matrix with extra computational and storage costs.
\end{remark}

%==========================================
\subsection{Numerical results}

\newcommand{\pre}{\text{pre}}
\newcommand{\sol}{\text{sol}}

\newcommand{\DIR}{\text{DIR}}
\newcommand{\FFT}{\text{FFT}}
\newcommand{\RSF}{\text{RSF}}

Here we provide several numerical examples to study the complexity and accuracy of the direct
method, the FFT-based method and the RSF-based method. The computational cost consists of two parts:
the precomputation part and the solution part, denoted by $T_{\pre}$ and $T_{\sol}$,
respectively. The precomputation part is the time used for preparing and inverting $\bA$ for the
direct method; preparing the convolution vector for the FFT-based method; or factorizing $\bA$ for
the RSF-based method.

The solution part is the time used for solving a single linear system \eqref{eq:iso_AU}, i.e.,
applying $\bA^{-1}$ for the direct method; applying GMRES/MINRES for the FFT-based method; or
applying the factorized inverse $\bA^{-1}$ for the RSF-based method. Here, we shall use the
superscript $\DIR$, $\FFT$ and $\RSF$ to denote the corresponding quantities for these three
methods, respectively. For example, $T_{\pre}^{\RSF}$ denotes the precomputation time of the
RSF-based method. All the numerical simulations are performed in MATLAB R2016b on a single core
(without parallelization) of an Intel(R) Xeon(R) CPU E5-1620 0 @ 3.60GHz on a 64-bit Linux desktop
with 32GB of RAM.

%Here we denote the tolerance of the iterative method as $\epsilon$.
The parameter $\epsilon$ is used to control the designed accuracy of the solution. In the FFT-based
method $\epsilon$ is the desired tolerance of the GMRES/MINRES method, while in the RSF-based method
$\epsilon$ is the relative accuracy parameter of the RSF. To measure the accuracy, we introduce the
relative $L^2$ error
\begin{equation}
    \cE = \frac{\norm{\bu-\bu_{\text{ref}}}_{L^2}}{\norm{\bu_{\text{ref}}}_{L^2}},
\end{equation}
where $\bu$ is the solution obtained either from the FFT-based or from the RSF-based method and
$\bu_{\text{ref}}$ is the reference solution. Similar to the computational cost, we use the
superscripts $\FFT$ and $\RSF$ to denote the relative error of these two methods, respectively.

\begin{figure}[ht]
  \subfigure[$f_1$ in \eqref{eq:source1}]{
    \includegraphics[width=0.3\textwidth]{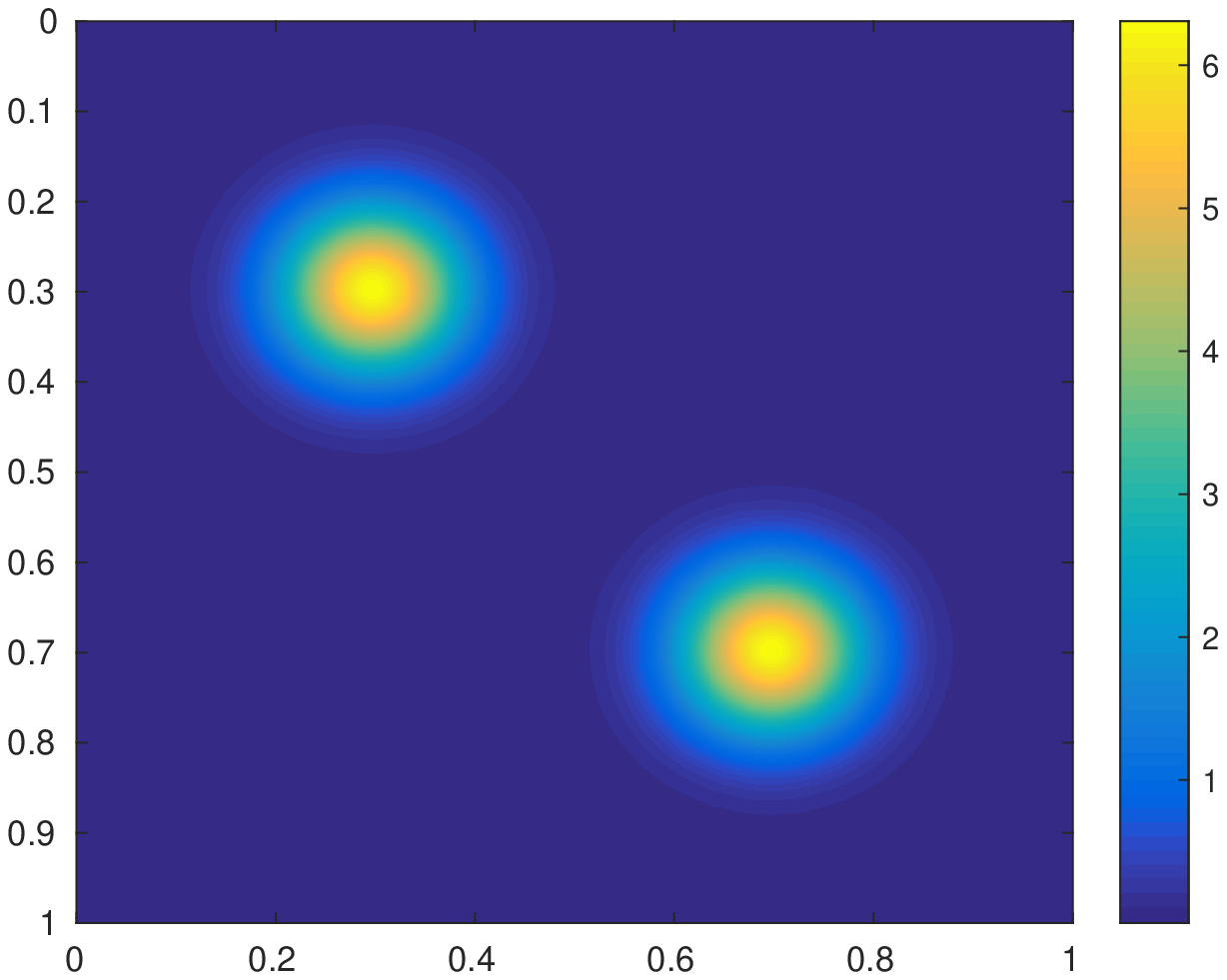}
  }
  \subfigure[$f_2$ in \eqref{eq:source2}]{
    \includegraphics[width=0.3\textwidth]{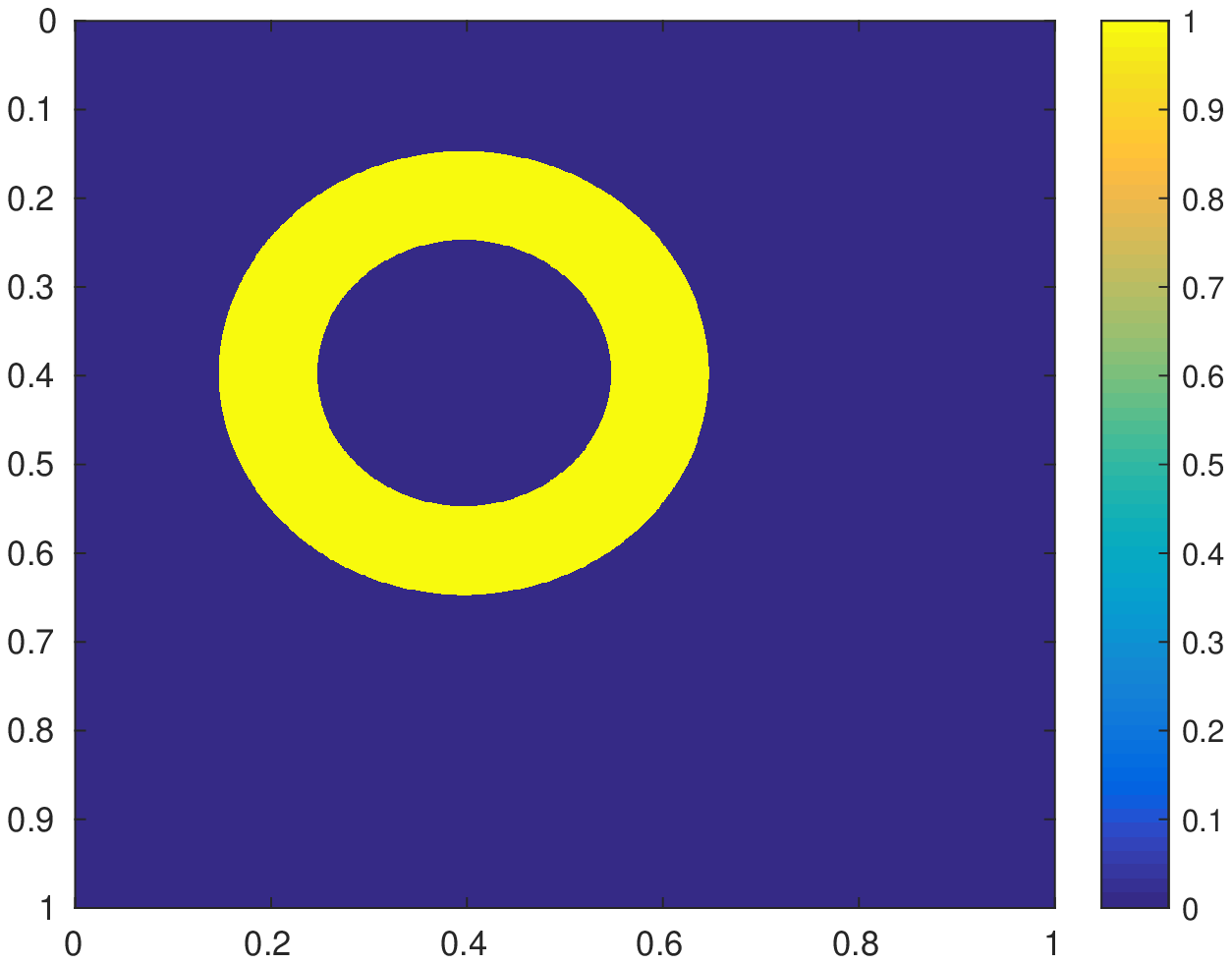}
  }
  \subfigure[$f_3$ in \eqref{eq:source3}]{
    \includegraphics[width=0.3\textwidth]{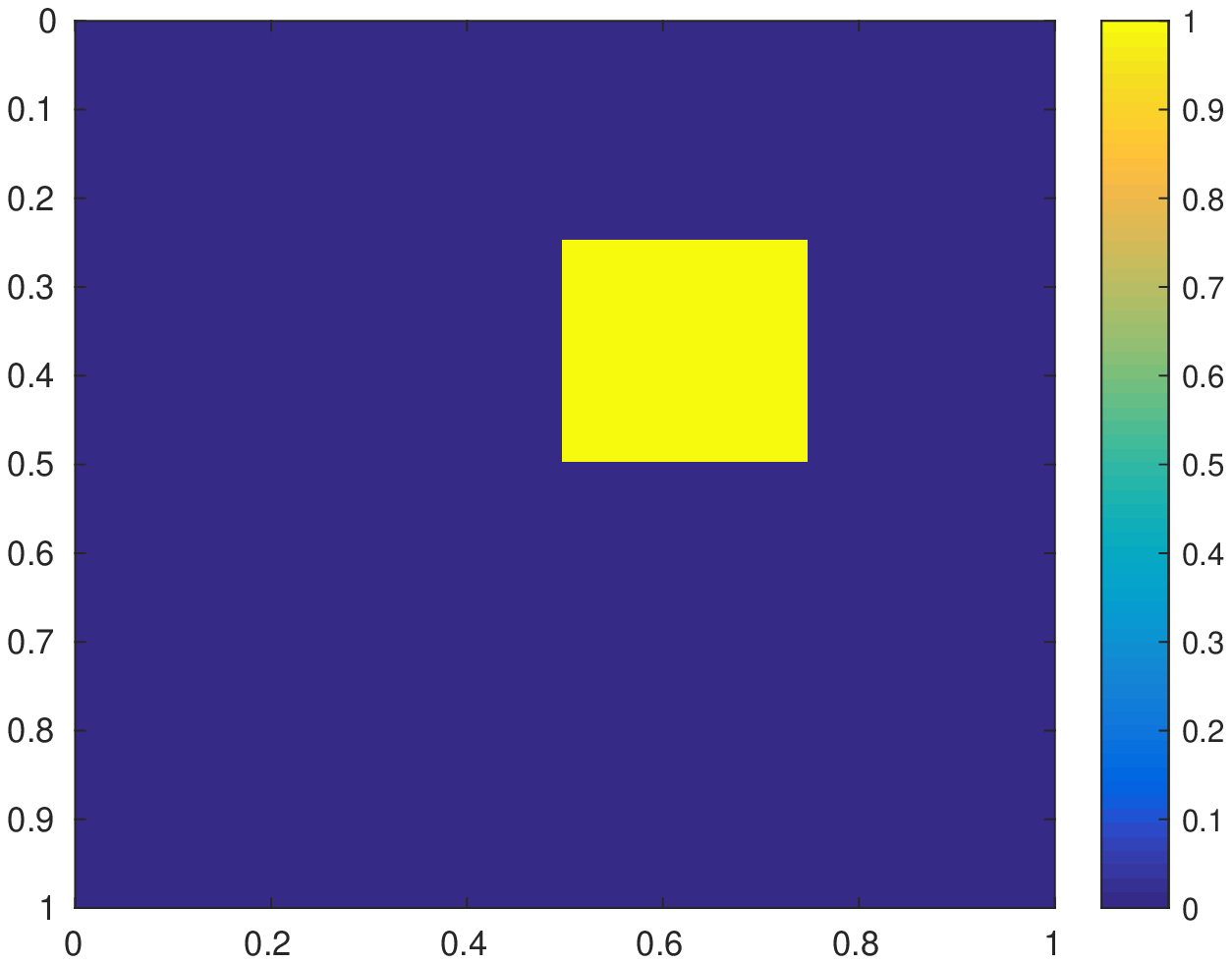}
  }
  \caption{\label{fig:source}Source terms \eqref{eq:source} used in
    numerical simulation.}
\end{figure}

Three source terms used in the simulations are
\begin{subequations} \label{eq:source}
  \begin{align}
    \label{eq:source1}
    f_1(\bx)&=
    \frac{1}{\sqrt{2\pi T}} \exp\left(-\frac{|\bx-\bc_1|^2}{2T}\right)
    +\frac{1}{\sqrt{2\pi T}} \exp\left(-\frac{|\bx-\bc_2|^2}{2T}\right),\\
    \label{eq:source2}
    f_2(\bx)&=\left\{\begin{array}{ll}
        1 & \text{if }|\bx-\bc_3|\in[3/20,1/4],\\
        0 & \text{otherwise},
    \end{array}\right.\\
    \label{eq:source3}
    f_3(\bx) &= \left\{\begin{array}{ll}
        1, & \text{if } x_1,x_2\in[1/2,3/4],\\
        0, & \text{otherwise},
    \end{array}\right.
\end{align}
\end{subequations}
with $T=4E-3, \bc_{1}=(3/10,7/10), \bc_{2}=(7/10,3/10), \bc_{3}=(3/5,2/5)$. Figure \ref{fig:source}
plots the contour of these source terms. The computational domain $\Omega$ is chosen as the unit
square, i.e.  $\Omega=[0,1]^2$. The domain is uniformly discretized by $n$ points in each direction.

%\subsection{Isotropic scattering case}

These numerical examples test for different $n$, different tolerance $\epsilon$, and different
scattering coefficients $\mu_s$ and $\mu_t$. Because the FFT-based method only works for the
constant transport coefficient case, we report the constant transport coefficient case and the
variable coefficient case separately.

\subsubsection{Homogeneous media}

\paragraph{Computational cost}
We perform simulation for different mesh size $n=32$, $64$, $128$, $256$ and $512$ and different
tolerance $\epsilon$ as $1E-4$, $1E-6$ and $1E-8$, and for the constant scattering coefficient
$\mu_s=1$ and $\mu_t=1.2$ to study the computational cost of the three methods. The results are
summarized in Table \ref{tab:iso_con_time}.  The source term is given in \eqref{eq:source1}.  The
FFT-based method is the fastest in both precomputation and solution. Comparing the solution time,
one finds that the RSF-based method is only a little slower than the FFT-based method. Both the
RSF-based method and direct method need to decompose the matrix, but the RSF is faster in both
precomputation and solution steps.

\begin{table}[ht]
    \centering
    %\begin{tabular}{|c|c|c|c|c|c|c|}
    \begin{tabular}{rr|ccc|ccc}
        \hline
        \rule{0pt}{3ex}$n$   & $\epsilon$            &
        $T_{\pre}^{\DIR}$(sec) & $T_{\pre}^{\FFT}$(sec)  &
        $T_{\pre}^{\RSF}$(sec)  & $T_{\sol}^{\DIR}$(sec)  &
        $T_{\sol}^{\FFT}$(sec) & $T_{\sol}^{\RSF}$(sec)  \\ \hline
        32  & $1E-4$  & $3.39E-2$  &   $3.23E-4$  &   $5.75E-2$ & $1.86E-3$  &   $2.99E-3$  &   $2.53E-3$\\ 
        64  & $1E-4$  & $9.19E-1$  &   $9.45E-4$  &   $4.34E-1$ & $2.44E-2$  &   $9.32E-3$  &   $1.27E-2$\\ 
        128 & $1E-4$  & $5.04E+1$  &   $1.76E-3$  &   $3.32E+0$ & $3.78E-1$  &   $1.52E-2$  &   $6.91E-2$\\ 
        256 & $1E-4$  &      --    &   $8.65E-3$  &   $2.36E+1$ &      --    &   $9.71E-2$  &   $3.51E-1$\\ 
        512 & $1E-4$  &      --    &   $3.97E-2$  &   $1.93E+2$ &      --    &   $2.83E-1$  &   $1.78E+0$\\ 
        \hline
        32  & $1E-6$  & $3.39E-2$  &   $3.23E-4$  &   $8.08E-2$ & $1.86E-3$  &   $3.16E-3$  &   $2.67E-3$\\ 
        64  & $1E-6$  & $9.19E-1$  &   $9.45E-4$  &   $7.24E-1$ & $2.44E-2$  &   $1.02E-2$  &   $1.38E-2$\\ 
        128 & $1E-6$  & $5.04E+1$  &   $1.76E-3$  &   $6.12E+0$ & $3.78E-1$  &   $1.74E-2$  &   $8.05E-2$\\ 
        256 & $1E-6$  &      --    &   $8.65E-3$  &   $4.43E+1$ &      --    &   $9.52E-2$  &   $4.41E-1$\\ 
        512 & $1E-6$  &      --    &   $3.97E-2$  &   $3.87E+2$ &      --    &   $3.33E-1$  &   $2.21E+0$\\ 
        \hline
        32  & $1E-8$  & $3.39E-2$  &   $3.23E-4$  &   $9.63E-2$ & $1.86E-3$  &   $3.18E-3$  &   $2.79E-3$\\ 
        64  & $1E-8$  & $1.01E+0$  &   $9.45E-4$  &   $9.79E+0$ & $2.44E-2$  &   $1.20E-2$  &   $1.42E-2$\\ 
        128 & $1E-8$  & $5.04E+1$  &   $1.76E-3$  &   $1.00E+1$ & $3.78E-1$  &   $2.20E-2$  &   $1.05E-1$\\ 
        256 & $1E-8$  &      --    &   $8.65E-3$  &   $8.96E+1$ &      --    &   $1.47E-1$  &   $6.64E-1$\\ 
        512 & $1E-8$  &      --    &   $3.97E-2$  &   $7.84E+2$ &      --    &   $3.60E-1$  &   $3.74E+0$\\ 
        \hline
    \end{tabular}
    \caption{\label{tab:iso_con_time}Isotropic scattering case: computational cost of the three
      methods for different $n$ and $\epsilon$ with constant scattering coefficient $\mu_s=1$ and
      transport coefficient $\mu_t=1.2$.}
\end{table}

%if $n$ is too small.

For a fixed mesh size $n$, when $\epsilon$ changes, the solution time of the FFT-based method
changes as well because $\epsilon$ affects the number of iterative steps of MINRES. Both the
precomputation time and the solution time of the RSF-based method are affected as well. When $\epsilon$
decreases, more skeletonization points are selected in the RSF, so both the precomputation and
solution times increase. It is worth to point out that even for $\epsilon=1E-8$, the computational
cost of RSF-based method is superior to the direct method if $n\geq64$. When $n\geq256$, due to the
limitation of RAM, the direct method fails to work any more.

Comparing these three methods, one observes that the FFT-based method is the fastest one for the
constant case. The solution time of RSF-based method is comparable with the FFT-based method and
its precomputation time is faster than the direct method if $n$ is not small.

\paragraph{Relative error}
\begin{table}[ht]
    \centering
    \begin{tabular}{rr|cc|cc|cc}
        \hline
        \rule{0pt}{3ex}
        $n$ & $\epsilon$ & $\cE^{\FFT}_{1}$ & $\cE^{\RSF}_{1}$ &
        $\cE^{\FFT}_{2}$ & $\cE^{\RSF}_{2}$ &
        $\cE^{\FFT}_{3}$ & $\cE^{\RSF}_{3}$ \\\hline
        32  & $1E-4$  & $2.66E-6$	&   $1.81E-5$	& $1.44E-5$	&   $1.85E-5$	& $2.10E-5$	&   $1.65E-5$	\\ 
        64  & $1E-4$  & $2.69E-6$	&   $2.67E-5$	& $1.44E-5$	&   $2.49E-5$	& $2.10E-5$	&   $2.23E-5$	\\ 
        128 & $1E-4$  & $2.70E-6$	&   $1.96E-5$	& $1.43E-5$	&   $1.88E-5$	& $7.69E-7$	&   $1.69E-5$	\\ 
        256 & $1E-4$  & $2.71E-6$	&   $1.89E-5$	& $3.81E-7$	&   $1.74E-5$	& $7.70E-7$	&   $1.49E-5$	\\ 
        512 & $1E-4$  & $2.72E-6$	&   $1.69E-5$	& $3.82E-7$	&   $1.49E-5$	& $7.71E-7$	&   $1.34E-5$	\\ 
        \hline
        32  & $1E-6$  & $8.96E-8$	&   $2.74E-7$	& $3.72E-7$	&   $2.77E-7$	& $7.49E-7$	&   $2.50E-7$	\\ 
        64  & $1E-6$  & $9.01E-8$	&   $3.06E-7$	& $1.64E-8$	&   $3.04E-7$	& $2.47E-8$	&   $2.68E-7$	\\ 
        128 & $1E-6$  & $9.04E-8$	&   $4.60E-7$	& $1.64E-8$	&   $4.64E-7$	& $2.49E-8$	&   $4.17E-7$	\\ 
        256 & $1E-6$  & $9.07E-8$	&   $4.37E-7$	& $1.64E-8$	&   $4.23E-7$	& $2.49E-8$	&   $3.73E-7$	\\ 
        512 & $1E-6$  & $3.12E-9$	&   $4.61E-7$	& $1.64E-8$	&   $4.49E-7$	& $2.49E-8$	&   $4.00E-7$	\\ 
        \hline
        32  & $1E-8$  & $3.02E-9$	&   $3.16E-9$	& $6.06E-10$	&   $3.01E-9$	& $1.03E-9$	&   $2.62E-9$	\\ 
        64  & $1E-8$  & $3.07E-9$	&   $1.25E-8$	& $7.01E-10$	&   $1.30E-8$	& $1.18E-9$	&   $1.19E-8$	\\ 
        128 & $1E-8$  & $5.09E-11$	&   $1.92E-8$	& $7.07E-10$	&   $1.94E-8$	& $1.20E-9$	&   $1.77E-8$	\\ 
        256 & $1E-8$  & $5.12E-11$	&   $1.73E-8$	& $7.07E-10$	&   $1.70E-8$	& $4.13E-11$	&   $1.53E-8$	\\ 
        512 & $1E-8$  & $5.14E-11$	&   $2.04E-8$	& $2.63E-11$	&   $2.04E-8$	& $4.13E-11$	&   $1.86E-8$	\\ 
        \hline
    \end{tabular}
    \caption{\label{tab:iso_con_err}Isotropic scattering case: relative error of the FFT-based
      method and RSF-based method for three source term in \eqref{eq:source} with constant
      scattering coefficient $\mu_s=1$ and transport coefficient $\mu_t=1.2$. $\cE_k$ is the
      relative error for the source term $f_k$, $k=1,2,3$.}
\end{table}
Simulations for different mesh size $n$ and different tolerance $\epsilon$ are performed to study
the relative error. Table \ref{tab:iso_con_err} presents the relative error of the FFT-based method
and the RSF-based method for the three source terms \eqref{eq:source} with $\mu_s=1$ and
$\mu_t=1.2$. The reference solution is the solution of the direct method if $n=32$, $64$ and $128$.
For $n>128$, the reference solution is computed with the FFT-based method with a very small relative
tolerance for MINRES. Both the two methods behavior well on the relative error.

\paragraph{Behavior for different scattering coefficients}
\begin{table}[ht]
    \centering
    \begin{tabular}{rr|cc|cc|cc}
        \hline \rule{0pt}{3ex}
        $\mu_s$ &   $\epsilon$ & $T_{\pre}^{\FFT}$ & $T_{\pre}^{\RSF}$ &
        $T_{\sol}^{\FFT}$ & $T_{\sol}^{\RSF}$ & $\cE^{\FFT}$ & $\cE^{\RSF}$ \\\hline
        1  &  $1E-4$  &  $1.78E-3$  &  $2.82E+0$  &  $1.51E-2$  &  $6.58E-2$  &  $2.70E-6$  &  $1.92E-5$\\	 
        5  &  $1E-4$  &  $1.78E-3$  &  $3.43E+0$  &  $2.16E-2$  &  $6.77E-2$  &  $1.60E-6$  &  $6.28E-5$\\	 
        10 &  $1E-4$  &  $1.79E-3$  &  $2.99E+0$  &  $3.41E-2$  &  $7.64E-2$  &  $1.90E-6$  &  $6.79E-4$\\	 
        \hline
        1  &  $1E-6$  &  $1.75E-3$  &  $4.87E+0$  &  $2.11E-2$  &  $7.66E-2$  &  $9.04E-8$  &  $5.28E-7$\\	 
        5  &  $1E-6$  &  $1.75E-3$  &  $4.96E+0$  &  $2.69E-2$  &  $8.08E-2$  &  $5.90E-8$  &  $5.42E-7$\\	 
        10 &  $1E-6$  &  $1.76E-3$  &  $4.94E+0$  &  $4.24E-2$  &  $8.47E-2$  &  $3.81E-8$  &  $2.81E-6$\\	 
        \hline
        1  &  $1E-8$  &  $1.75E-3$  &  $9.81E+0$  &  $2.19E-2$  &  $1.16E-1$  &  $5.09E-11$  &  $1.67E-8$\\	 
        5  &  $1E-8$  &  $1.76E-3$  &  $1.00E+1$  &  $3.24E-2$  &  $1.14E-1$  &  $3.46E-10$  &  $1.20E-8$\\	 
        10 &  $1E-8$  &  $1.75E-3$  &  $9.04E+0$  &  $4.57E-2$  &  $1.05E-1$  &  $9.82E-11$  &  $9.79E-9$\\	 
        \hline
    \end{tabular}
    \caption{\label{tab:iso_con_mus}Isotropic scattering case: computational cost and relative error
      for different scattering coefficient $\mu_s$ and $\epsilon$ with $n=128$ and
      $\mu_t=\mu_s+0.2$.}
\end{table}
Here we study the dependence on the scattering coefficient $\mu_s$ for the source term
\eqref{eq:source1}. When $\mu_s$ is small, for example $\mu_s=1$, the physics corresponds to the
transport regime for the photon. The case of relatively large $\mu_s$, for example $\mu_s=10$,
corresponds to the diffusive regime.  Here we always set $\mu_t = \mu_s + 0.2$.  The simulation
results for $\mu_s=1,5,10$ are listed in Table \ref{tab:iso_con_mus}. Because the use of the MINRES,
for different $\mu_s$, the solution time of the FFT-based method varies significantly as the
iterations are quite different. The RSF-based method is quite robust in terms of the scattering
coefficient. This indicates that both the two methods work in both diffusive regimes and transport
regimes and that the RSF-based method is quite insensitive to the parameters.

\subsubsection{Inhomogeneous media}
When the transport coefficient $\mu_t$ depends on $\bx$, the FFT-based method fails to work. Here we
test the RSF-based method for the variable transport coefficient case. The scattering coefficient
takes the form of
\begin{equation}\label{eq:mus}
  \mu_s(\bx)=1+\rho \exp\left( -\frac{|\bx-\bc|^2}{4} \right), \quad \bc = \left(\frac{1}{2},
  \frac{1}{2}\right),
\end{equation}
where $\rho$ is constant.

\paragraph{Computational cost}
\begin{table}[ht]
  \centering
  \begin{tabular}{rr|cc|cc}
    \hline \rule{0pt}{3ex}
    $n$ & $\epsilon$ & 
    $T_{\pre}^{\DIR}$(sec) & $T_{\pre}^{\RSF}$(sec) & 
    $T_{\sol}^{\DIR}$(sec) & $T_{\sol}^{\RSF}$(sec) \\
    \hline
    32  & $1E-4$ & $1.62E-1$ &  $1.41E-1$ & $2.51E-3$ &  $3.22E-3$\\ 
    64  & $1E-4$ & $1.94E+0$ &  $7.16E-1$ & $2.32E-2$ &  $1.33E-2$\\ 
    128 & $1E-4$ & $6.84E+1$ &  $5.01E+0$ & $4.13E-1$ &  $6.72E-2$\\ 
    256 & $1E-4$ &     --    &  $3.81E+1$ &     --    &  $3.56E-1$\\ 
    512 & $1E-4$ &     --    &  $2.81E+2$ &     --    &  $1.77E+0$\\ 
    \hline
    32  & $1E-6$ & $1.62E-1$ &  $1.67E-1$ & $2.51E-3$ &  $3.85E-3$\\ 
    64  & $1E-6$ & $1.94E+0$ &  $1.05E+0$ & $2.32E-2$ &  $1.40E-2$\\ 
    128 & $1E-6$ & $6.84E+1$ &  $8.66E+0$ & $4.13E-1$ &  $7.84E-2$\\ 
    256 & $1E-6$ &     --    &  $6.72E+1$ &     --    &  $4.35E-1$\\ 
    512 & $1E-6$ &     --    &  $5.20E+2$ &     --    &  $2.26E+0$\\ 
    \hline
    32  & $1E-8$ & $1.62E-1$ &  $1.84E-1$ & $2.51E-3$ &  $4.07E-3$\\ 
    64  & $1E-8$ & $1.94E+0$ &  $1.43E+0$ & $2.32E-2$ &  $1.46E-2$\\ 
    128 & $1E-8$ & $6.84E+1$ &  $1.35E+1$ & $4.13E-1$ &  $1.12E-1$\\ 
    256 & $1E-8$ &     --    &  $1.19E+2$ &     --    &  $6.15E-1$\\ 
    512 & $1E-8$ &     --    &  $9.59E+2$ &     --    &  $3.40E+0$\\ 
    \hline
  \end{tabular}
  \caption{\label{tab:iso_var_time}Isotropic scattering case: computational cost of the direct
    method and RSF-based method for different $n$ and $\epsilon$ with variable scattering
    coefficient \eqref{eq:mus} with $\rho=1$ and $\mu_t=\mu_s+0.2$.}
\end{table}
We perform the simulations for different mesh size $n$ and different tolerance $\epsilon$ for the
scattering term \eqref{eq:mus} with $\rho=1$. The computational costs of the direct method and the
RSF-based method are presented in Table \ref{tab:iso_var_time}. The source term is chosen to be
\eqref{eq:source1}. Since $\mu_t$ depends on $\bx$, one has to calculate a line integral in the
evaluation of each entry of the matrix $\bA$. However, because the factorization dominates the
computational cost for the two methods, the precomputation time is only a bit slower than the
constant scattering case.

Comparing these two methods, when $n\geq64$, the RSF-based method becomes faster than the direct
method on both the precomputation and solution parts.

\paragraph{Relative error}
\begin{table}[ht]
    \centering
    \begin{tabular}{rr|c|c|c}
        \hline \rule{0pt}{3ex}
        $n$ & $\epsilon$ & $\cE^{\RSF}_1$ & $\cE^{\RSF}_2$ & $\cE^{\RSF}_3$ \\
        \hline
        32  & $1E-4$ & $2.02E-5$ & $2.02E-5$ & $1.70E-5$\\ 
        64  & $1E-4$ & $4.19E-5$ & $3.03E-5$ & $2.31E-5$\\
        128 & $1E-4$ & $3.04E-5$ & $2.32E-5$ & $2.05E-5$\\
        \hline
        32  & $1E-6$ & $3.05E-7$ & $3.36E-7$ & $3.05E-7$\\
        64  & $1E-6$ & $2.88E-7$ & $3.11E-7$ & $2.35E-7$\\
        128 & $1E-6$ & $4.29E-7$ & $4.43E-7$ & $3.51E-7$\\
        \hline
        32  & $1E-8$ & $6.49E-9$ & $8.73E-9$ & $6.50E-9$\\
        64  & $1E-8$ & $6.43E-9$ & $6.71E-9$ & $6.01E-9$\\
        128 & $1E-8$ & $1.33E-8$ & $1.42E-8$ & $1.06E-8$\\
        \hline
    \end{tabular}
    \caption{\label{tab:iso_var_err}Isotropic scattering case: relative error of the RSF-based
      method for three source terms \eqref{eq:source} with variable scattering term \eqref{eq:mus}
      with $\rho=1$ and $\mu_t=\mu_s+0.2$. $\cE_k$ is the relative error for the source term $f_k$.}
\end{table}
Table \ref{tab:iso_var_err} presents the relative error for different mesh size $n$ and different
tolerance $\epsilon$ with the scattering term \eqref{eq:mus} with $\rho=1$ and
$\mu_t=\mu_s+0.2$. The reference solution is calculated by the direct method and the relative error
behaves similarly as the constant scattering case.

\paragraph{Behavior for different scattering coefficients}
\begin{table}[ht]
  \centering
  \begin{tabular}{rr|c|c|c}
      \hline \rule{0pt}{3ex}
    $\rho$ & $\epsilon$ & $T_{\pre}^{\RSF}$ 
    & $T_{\sol}^{\RSF}$ & $\cE^{\RSF}$\\\hline
    1  & $1E-4$ &  $4.70E+0$ & $6.68E-2$ &  $2.29E-5$\\	
    5  & $1E-4$ &  $5.57E+0$ & $6.84E-2$ &  $6.37E-5$\\	
    10 & $1E-4$ &  $4.92E+0$ & $6.95E-2$ &  $4.37E-4$\\	
    \hline
    1  & $1E-6$ &  $8.78E+0$ & $8.36E-2$ &  $4.11E-7$\\	
    5  & $1E-6$ &  $8.08E+0$ & $7.43E-2$ &  $6.02E-7$\\	
    10 & $1E-6$ &  $8.36E+0$ & $7.79E-2$ &  $2.53E-6$\\	
    \hline
    1  & $1E-8$ &  $1.46E+1$ & $1.08E-1$ &  $1.17E-8$\\	
    5  & $1E-8$ &  $1.45E+1$ & $1.11E-1$ &  $9.66E-9$\\	
    10 & $1E-8$ &  $1.38E+1$ & $1.05E-1$ &  $1.08E-8$\\	
    \hline
  \end{tabular}
  \caption{\label{tab:iso_var_mus} Isotropic scattering case: computational cost and relative
    error for different $\rho$ in \eqref{eq:mus} and $\epsilon$ with $n=128$ and
    $\mu_t=\mu_s+0.2$.}
\end{table}
Here we study the dependence of the computational cost and accuracy of the methods on the scattering
coefficient $\mu_s$ for the source term \eqref{eq:source1}. As before, $\mu_t = \mu_s + 0.2$. The
simulation results for $\rho=1,5,10$ in \eqref{eq:mus} are listed in Table
\ref{tab:iso_var_mus}. Similar as the constant transport coefficient case, the RSF-based method is
quite robust in terms of the scattering coefficient. This indicates that the RSF-based method works
well in both diffusive regimes and transport regimes and is insensitive to different regimes.

%% For the isotropic scattering case, the FFT-based method is the most efficient method but it has
%% strong requirement on the transport coefficient and the computational domain. The direct method can
%% works on any transport coefficient and domain but it's too expansive and costs too much memory. The
%% RSF-based method possesses the both advantages.

%\paragraph{Line integral}
% The error for different Q is very close.

%=======================================================================================
\section{Anisotropic scattering}\label{sec:alg2}

This section studies the RTE with anisotropic scattering. The start point is the observation that
the scattering term $\mu_s(\bx)\fint_{\bbS^{d-1}} \sigma(\bx,\bv\cdot\bv') \Phi(\bx,\bv')\dd\bv'$ is
a convolution on sphere. Thus the Fourier transformation for $d=2$ and the spherical harmonics
transformation for $d=3$ can be used to simplify the scattering term.

%Noticing this, we extend the results in the last section into the anisotropic scattering case.

%==========================================
\subsection{Integral formulation}\label{sec:ani_integral}
The RTE \eqref{eq:RTE} has been well studied in \cite{case1963existence, LionsVol6,
  schlottbom2011forward, egger2014lp}. Here, we extend the derivation in Section
\ref{sec:iso_integral} to the anisotropic case. The extension is not trivial due to the complexity
of the scattering term.

For the anisotropic case, let us redefine the operator $\cA$ as
\begin{equation}
  (\cA\Psi)(\bx,\bv) := \fint_{\bbS^{d-1}}\sigma(\bx,\bv\cdot\bv')\Psi(\bx,\bv')\dd\bv',
\end{equation}
then the RTE \eqref{eq:RTE} can be solved as 
\begin{equation}\label{eq:ani_solution}
  \Phi(\bx,\bv) = \cT^{-1}\mu_s\cA\Phi+\cT^{-1}\cJ f,
\end{equation}
where $\cT^{-1}$ and $\cJ$ are same as those in Section \ref{sec:alg1}.

Notice $(\cA\Phi)(\bx,\bv) := \fint_{\bbS^{d-1}}\sigma(\bx,\bv\cdot\bv') \Phi(\bx,\bv')\dd\bv'$ is a
convolution on sphere. Since the formulas for $2D$ and $3D$ are quite different, we discuss them
separately.

\subsubsection{$2D$ case}

In 2D ($d=2$), $\bv=(\cos(\theta),\sin(\theta))$. The operator $\cA$ can be rewritten as
\begin{equation}
  (\cA\Phi)(\bx,\theta) = \fint_{0}^{2\pi}
  \sigma(\bx,\theta-\theta')\Phi(\bx,\theta')\dd\theta'.
\end{equation}
Applying Fourier transformation on $\Phi$ and $\sigma$ with respect to $\theta$ on $[0,2\pi]$ gives
rise to
\begin{equation}
  (\cA\Phi)(\bx,\theta) = 
  \sum_{k=-\infty}^{\infty}\hat{\sigma}(\bx,k)
  \hat{\Phi}(\bx,k)e^{\imag k\theta},
\end{equation}
where we take the following convention for the continuous Fourier transform
\begin{equation}\label{eq:ani_Fouriermode}
  \hat{\phi}(\bx,k)=(\cF\phi)(\bx,k):=\fint_{0}^{2\pi}\phi(\bx,\theta)e^{-\imag
    k\theta}\dd\theta, \quad \phi=\Phi, \sigma.
\end{equation}
Then the solution \eqref{eq:ani_solution} can be reformulated as
\begin{equation}\label{eq:ani_solution_2d}
  \cF\Phi = \cF\cT^{-1}\mu_s\cA\Phi +\cF\cT^{-1}\cJ f.
\end{equation}
When the scattering kernel $\sigma$ is smooth (as we have assumed), a few Fourier modes of $\sigma$ dominate, i.e.
\begin{equation}
  \sigma(\bx,\theta)\approx \sum_{k\in\bbM} \hat{\sigma}(\bx,k)e^{\imag
    k\theta},
\end{equation}
where $\bbM\subset\bbZ$ contains a small number of frequencies, typically centering at zero. Then
the operator $\cA$ can be approximated as
\begin{equation}\label{eq:ani_2d_scattering}
  (\cA\Phi)(\bx,\theta) \approx (\tilde{\cA}\Phi)(\bx,\theta):= 
  \sum_{k\in\bbM} e^{\imag k\theta} \hat{\sigma}(\bx,k)  \hat{\Phi}(\bx,k).
\end{equation}
Noticing that integral of $\Phi(\bx,\theta)$ with respect to $\theta$ is the mean local density, which is
usually important in application, we always assume $0\in\bbM$.
We introduce an operator as
\newcommand\cFinverse{\cF^{\dagger}}
\begin{equation}
  (\cFinverse\hat{\phi})(\bx,\theta)=\sum_{k\in\bbM}\hat{\phi}(\bx,k)e^{\imag k\theta},
\end{equation}
which satisfies $\cF \cFinverse\hat{\phi} = \hat{\phi}$, similar as the pseudo-inverse in linear
algebra.
Therefore, we have $\tilde{\cA}\Phi=\cFinverse(\hat{\sigma}\hat{\Phi})$.
Moreover, let $\hat{g}(\bx,k)=f(\bx)\delta_{k,0}$, then $\cJ f = \cFinverse\hat{g}$.

Replacing $\cA$ in \eqref{eq:ani_solution_2d} by $\tilde{\cA}$ results in a Fredholm integral
equation of the second kind with the following form
\begin{equation}\label{eq:ani_2d_Fredholm}
  \hat{\Phi}(\bx,k) =
  \cK\hat{\Phi}(\bx,k)+\tilde{\cK}\hat{g}(\bx,k),
  %\cF\cT^{-1}\mu_s\cS\hat{\Phi}(\bx,k)+\cF\cT^{-1}f(\bx,k),
  %\quad \bx\in\Omega, k\in\bbM.
\end{equation}
where the operators $\cK$ and $\tilde{\cK}$ are defined as
\begin{equation}
    \cK=\cF\cT^{-1}\cFinverse\mu_s\hat{\sigma},\quad \tilde{\cK}=\cF\cT^{-1}\cFinverse.
\end{equation}
Noticing that 
\begin{equation}\label{eq:ani_derivation}
    \begin{aligned}
      \tilde{\cK}\hat{\phi}(\bx,k) &= \sum_{k'\in\bbM}
      \fint_0^{2\pi} \int_0^{\tau(\bx,\bv)} 
        \exp\left(-\int_0^s \mu_t\left(\bx - s'\bv\right) ds'\right)
        \hat{\phi}(\bx-s\bv,k)
        e^{\imag (k'-k)\theta}\dd s\dd\theta\\
        &=\sum_{k'\in\bbM}\int_{\Omega}
        \tilde{K}(\bx,\by)
        %\frac{1}{2\pi|\bx-\by|}
        %\exp\left( -|\bx-\by|\int_0^1\mu_t(\bx-s'(\bx-\by))\dd s' \right)
        e^{\imag(k'- k)\theta}\hat{\phi}(\by,k)\dd\by,
    \end{aligned}
\end{equation}
where $\theta = \arccos\left( \frac{\bx-\by}{|\bx-\by|} \right)$, and $\tilde{K}(\bx,\by)$ is the
kernel for the isotropic case defined in \eqref{eq:iso_operator2}, we have
\begin{equation}
  \tilde{\cK}\hat{\phi}(\bx,k) =
  \sum_{k'\in\bbM}\int_{\Omega}\tilde{K}(\bx,k,\by,k')\hat{\phi}(\by,k)\dd\by,\quad 
  \tilde{K}(\bx,k,\by,k') = \tilde{K}(\bx,\by)e^{\imag (k'-k)\theta},
\end{equation}
and
\begin{equation}\label{eq:ani_2d_operator}
    \cK\hat{\phi}(\bx,k) = 
    \sum_{k'\in\bbM}\int_{\Omega}K(\bx,k,\by,k')\hat{\phi}(\by,k)\dd\by,
    \quad
    K(\bx,k,\by,k') = K(\bx,\by)\hat{\sigma}(\by,k')e^{\imag(k-k')\theta},
\end{equation}
where ${K}(\bx,\by)$ is the kernel defined in \eqref{eq:iso_operator}. The kernel $\tilde{K}$ is
Hermitian while $K$ is not due to the existence of $\mu_s(\by)\hat{\sigma}(\by,k')$. If
$|\hat{\sigma}(\by,k')|$ is uniformly positive for
any $k'\in\bbM$, i.e. there exists a $c_0>0$ such that $|\hat{\sigma}(\by,k')|\geq c_0$ for any
$k'\in\bbM$, one can use the same technique as in \eqref{eq:iso_Fredholm2} to obtain a Hermitian
operator. However, the condition $\hat{\sigma}(\by,k')>0$ for any $k'\in\bbM$, $\by\in\Omega$ is
usually too strong to hold in general.

\subsubsection{$3D$ case}
For the case $d=3$, $\bv=(\sin(\theta)\cos(\psi),
\sin(\theta)\sin(\psi), \cos(\theta))$. Expressing the spherical
function $\Phi(\bx,\bv)$ and $\sigma(\bx,\theta)$ in terms of the
spherical harmonic basis, we obtain
\begin{equation}
    \begin{aligned}
        \Phi(\bx,\bv) &=
        \sum_{l=0}^{\infty}\sum_{m=-l}^l\hat{\Phi}(\bx,l,m)Y_l^m(\theta,\psi),\\
        \sigma(\bx,\theta) &=
        \sum_{l=0}^{\infty}\hat{\sigma}(\bx,l)Y_l^0(\theta),
    \end{aligned}
\end{equation}
where $Y_l^m(\theta,\psi)$ is the spherical harmonics, and 
\begin{equation}\label{eq:ani_3d_scattering}
    \hat{\phi}(\bx,l,m) =\cF\phi(\bx,l,m) 
    := \frac{1}{4\pi} \int_{0}^{2\pi}\int_0^{\pi}\phi(\bx,\theta,\psi)
    Y_l^m(\theta,\psi)\dd\theta\dd\psi,\quad 
    \phi=\Phi, \sigma.
\end{equation}
Then the operator $\cA$ can be expressed as
\begin{equation}
    (\cA\Phi)(\bx,\bv)=\sum_{l=0}^{\infty}\sum_{m=-l}^l\hat{\Phi}(\bx,l,m)\hat{\sigma}(\bx,l)
    \sqrt{\frac{4\pi}{2l+1}}Y_l^m(\theta,\psi).
\end{equation}
Similar to the $d=2$ case, if a few modes of $\sigma$ dominate, i.e.
\begin{equation}
  \sigma(\bx,\theta)\approx\sum_{l\in\bbM}\hat{\sigma}(\bx,l)Y_l^m(\theta,\psi),
\end{equation}
the operator $\cA$ can be approximated by
\begin{equation}
  (\cA\Phi)(\bx,\bv)\approx \tilde{\cA}\Phi(\bx,\bv) := 
  \sum_{l\in\bbM}\sum_{m=-l}^l\hat{\Phi}(\bx,l,m)\hat{\sigma}(\bx,l)
  \sqrt{\frac{4\pi}{2l+1}}Y_l^m(\theta,\psi).
\end{equation}
Similar to the derivation of the $d=2$ case, by replacing $\cA$ by $\tilde{\cA}$, one obtains a
Fredholm integral equation of the second kind with the form
\begin{equation}\label{eq:ani_3d_Fredholm}
    \hat{\Phi}(\bx,l,m) = \cK_{A}\hat{\Phi}(\bx,l,m)
    +\tilde{\cK} \hat{g}(\bx,l,m),
\end{equation}
where $\hat{g}(\bx,l,m)=f(\bx)\delta_{l,0}\delta_{m,0}$, and the operators $\cK$ and
$\tilde{\cK}$ are defined as
\begin{equation}
  \begin{aligned}
    \cK\hat{\phi}(\bx,l,m)&=%\cF\cT^{-1}\cS\mu_s\hat{\phi}(\bx,l,m) =
    \sum_{l'\in\bbM}\sum_{m'=-l'}^{l'}\int_{\Omega}K(\bx,l,m,\by,l',m')\hat{\phi}(\by,l',m')\dd\bx,\\
    K(\bx,l,m,\by,l',m')&=K(\bx,\by)\hat{\sigma}(\by,l') \sqrt{\frac{4\pi}{2l+1}}
    Y_l^m(\theta,\psi)Y_{l'}^{m'}(\theta,\psi),
  \end{aligned}
\end{equation}
and
\begin{equation}
  \begin{aligned}
    \tilde{\cK}\hat{\phi}(\bx,l,m)&%=\cF\cT^{-1}\cS\hat{\phi}(\bx,l,m)
    =\sum_{l'\in\bbM}\sum_{m'=-l'}^{l'}\int_{\Omega}\tilde{K}(\bx,l,m,\by,l',m')\hat{\phi}(\by,l,m)\dd\bx,\\
    \tilde{K}(\bx,l,m,\by,l',m')&=\tilde{K}(\bx,\by)Y_l^m(\theta,\psi)Y_{l'}^{m'}(\theta,\psi).
  \end{aligned}
\end{equation}

We remark that if $\bbM=\{0\}$, then the Fredholm equations \eqref{eq:ani_2d_Fredholm} and
\eqref{eq:ani_3d_Fredholm} simplify to the ones in the isotropic scattering case.

%==========================================
\subsection{Existence theory}
In this subsection, we extend the existence theory for the isotropic case to the anisotropic
one. For simplicity, here we only study the case $d=2$, and all the conclusions in this subsection
hold for the $d=3$ case.

Define the vector function space $L^p(\Omega, \bbM)$ with $1\leq p\leq \infty$ as
\begin{equation}
  L^p(\Omega,\bbM) :=\left\{ \bv:\Omega\rightarrow\bbC^{\#\bbM}\;\text{such
    that}\;\bv(x)=(v_m(x))_{m\in\bbM}: v_m\in L^p(\Omega) \right\},
\end{equation}
with the norm given by 
\begin{equation}
  \normLpOmegaAni{p}{\bv} =   \left(  \int_\Omega  \norm{\bv(x)}_2^p dx   \right)^{1/p},
\end{equation}
and introduce the assumption:
\begin{enumerate}
\item[(C3)] $\sigma(\bx,\bv\cdot\bv')$ is non-negative and measurable and
  \begin{equation}
    \fint_{\bbS^{d-1}}\sigma(\bx,\bv\cdot\bv')\dd\bv'=1,
    \quad
    \text{for any }(\bx,\bv)\in\Omega\times\bbS^{d-1}.
  \end{equation}
\end{enumerate}
The main analytic result in the anisotropic case is the following theorem.
\begin{theorem}\label{thm:ani_existence}
  Under assumptions (C1), (C2) and (C3), for any $1\leq p\leq \infty$ and $f\in L^p(\Omega)$, the
  Fredholm equation \eqref{eq:ani_2d_Fredholm} admits a unique solution $\hat{\Phi}\in
  L^p(\Omega,\bbM)$ which satisfies
  \begin{equation}\label{eq:ani_unique}
    \normLpOmegaAni{p}{\hat{\Phi}}
    \leq \tau \exp\left( \tau\normLpOmega{\infty}{\mu_s} \right)
    \normLpOmega{p}{f}.
    \end{equation}
\end{theorem}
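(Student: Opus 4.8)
The plan is to mirror the isotropic argument behind Theorem \ref{thm:existence}: recast \eqref{eq:ani_2d_Fredholm} as a fixed-point equation for a contraction on the coefficient space $L^p(\Omega,\bbM)$, apply Banach's fixed point theorem, and read off the a priori bound. The convenient device is to pass to the physical angular variable. Set $\Psi=\cFinverse\hat\Phi$, the band-limited reconstruction whose only nonzero Fourier modes lie in $\bbM$. By Parseval, $\normLpOmegaAni{p}{\hat\Phi}$ equals the mixed norm $\big(\int_\Omega(\fint_0^{2\pi}|\Psi(\bx,\theta)|^2\dd\theta)^{p/2}\dd\bx\big)^{1/p}$, so it suffices to estimate $\Psi$. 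Applying $\cFinverse$ to \eqref{eq:ani_2d_Fredholm} turns it into $\Psi=P_{\bbM}\cT^{-1}(\mu_s\tilde{\cA}\Psi)+P_{\bbM}\cT^{-1}\cJ f$, where $P_{\bbM}=\cFinverse\cF$ is the orthogonal projection onto modes in $\bbM$, hence a contraction in $L^2_\theta$.

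The key observation, which allows the \emph{same} constant $C=1-\exp(-\tau\normLpOmega{\infty}{\mu_s})$ as in Lemma \ref{lem:Linfcontraction}, is that on band-limited functions $\tilde{\cA}$ coincides with the genuine scattering operator. Since $\Psi$ has modes only in $\bbM$, formula \eqref{eq:ani_2d_scattering} gives $\tilde{\cA}\Psi(\bx,\theta)=\fint_0^{2\pi}\sigma(\bx,\theta-\theta')\Psi(\bx,\theta')\dd\theta'=(\cA\Psi)(\bx,\theta)$, because convolving a $\bbM$-band-limited function against $\sigma$ stays band-limited. Assumption (C3) ($\sigma\geq0$ with unit average) then yields both the pointwise domination $|\tilde{\cA}\Psi|\leq\cA|\Psi|$ and the identity $\cA 1=1$; equivalently, the multiplier bound $|\hat\sigma(\bx,k)|\leq\hat\sigma(\bx,0)=1$, so $\tilde{\cA}$ is a contraction in $L^2_\theta$ for each fixed $\bx$. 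Thus the scattering step contributes a factor at most $1$ to the angular norm and never amplifies it.

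With these facts I would reprove the analogs of Lemma \ref{lem:Linfcontraction} and Lemma \ref{lem:Lpcontranction} on $L^p(\Omega,\bbM)$. Following the substitution $w=\mu_s\hat\Phi+f$ as in \eqref{eq:iso_Fredholm_w}, the scattering enters only through the averaging operator $\cA$, and because $\cA 1=1$ it passes through the core estimate \eqref{eq:mu_inequality} at no cost: using $\mu_t\geq\mu_s$ and the relation \eqref{eq:relation_Emu}, the transport factor still satisfies $\fint_0^{2\pi}\int_0^{\tau}E^{[\mu_t]}(\bx,\bv,s)\,\mu_s(\bx-s\bv)\dd s\dd\theta\leq C<1$. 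This gives the contraction with norm $\leq C$, so Banach's fixed point theorem produces a unique $\hat\Phi\in L^p(\Omega,\bbM)$, and the a priori estimate \eqref{eq:ani_unique} follows from the angular analog of $\normLpOmegaAni{p}{\hat\Phi}\leq\tau\,\normLpOmegaAni{p}{w}$ together with $\normLpOmegaAni{p}{w}\leq(1-C)^{-1}\normLpOmega{p}{f}=\exp(\tau\normLpOmega{\infty}{\mu_s})\normLpOmega{p}{f}$, exactly as in the scalar case.

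The main obstacle is that the angular variable is simultaneously the transport direction, so the scattering (which mixes angles at each spatial point) and the transport $\cT^{-1}$ (which acts diagonally in angle along characteristics) interact; a naive Schur or operator-norm estimate couples them and produces the inferior constant $\tau\normLpOmega{\infty}{\mu_s}$ rather than $C$. The resolution is precisely to exploit that $\tilde{\cA}=\cA$ fixes constants on band-limited data, so that after the $w=\mu_s\hat\Phi+f$ substitution the scattering is absorbed into the probability-measure structure underlying \eqref{eq:mu_inequality}; the remaining care lies in controlling the $\ell^2$-in-mode norm at the truncation step, which is handled by Bessel's inequality and the contractivity of $P_{\bbM}$ in $L^2_\theta$. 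The $d=3$ case is identical once $\hat\sigma(\bx,l)\sqrt{4\pi/(2l+1)}$ is recognized as the spherical-harmonic multiplier bounded by $\hat\sigma(\bx,0)\cdot\sqrt{4\pi}=1$ under (C3).
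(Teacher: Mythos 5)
Your proposal is correct and follows essentially the same route as the paper's proof: rewrite the equation via the substitution $\hat{\Psi}=\mu_s\hat{\sigma}\hat{\Phi}+\hat{g}$ (your $w$ should carry the factor $\hat{\sigma}$, a harmless slip), show the resulting operator is a contraction with the same constant $C=1-\exp(-\tau\normLpOmega{\infty}{\mu_s})$ using (C3) and the isotropic estimate \eqref{eq:mu_inequality}, apply Banach's fixed point theorem, and pick up the factor $\tau$ from $\normLpOmegaAni{p}{\hat{\Phi}}\leq\tau\normLpOmegaAni{p}{\hat{\Psi}}$. Your Parseval/band-limited reformulation is only a repackaging of the paper's coefficient-space argument, since the paper's bound $\norm{\bG}_2\leq C$ for the mode-coupling matrix $\bG(\bx)$ is obtained exactly by applying Parseval to the reconstruction $\sum_{k\in\bbM}w_k e^{\imag k\theta}$.
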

The proof line is similar to that of Theorem \ref{thm:existence}.  Before the proof, we first define
\begin{equation*}
  G(k,k';\bx) = \fint_0^{2\pi} \int_0^{\tau(\bx,\bv)} E^{[\mu_t]}(\bx,\bv,s)\mu_s(\bx) e^{\imag
    (k'-k) \theta}\dd s\dd\theta,
\end{equation*}
and the matrix function $\bG(\bx)=(G(k,k';\bx))$.  For any given $\bw\in\bbC^{M}$, a direct
calculation yields
\begin{equation*}
  \begin{aligned}
    \bw^{H}\bG\bw&=
    \fint_0^{2\pi}\int_0^{\tau(\bx,\bv)}E^{[\mu_t]}(\bx,\bv,s)
    \mu_s(\bx)\sum_{k\in\bbM}w_{k}^* e^{-\imag k\theta} 
    \sum_{k'\in\bbM}w_{k'}e^{\imag k'\theta}
    \dd s\dd\theta\\
    &\leq C \fint_0^{2\pi}\left|
    \sum_{k\in\bbM}w_ke^{\imag k\theta}\right|^2\dd\theta
    =C |\bw|^2,
  \end{aligned}
\end{equation*}
where the relation \eqref{eq:mu_inequality}, is used in the inequality. Therefore, we obtain
\begin{equation}\label{eq:ani_Gnorm}
  \norm{\bG}_2\leq C < 1.
\end{equation}

Similar to the isotropic case, we first study the case $p=\infty$ and obtain the following result.
\begin{lemma}\label{lem:Linfcontraction_ani}
  Under assumptions (C1), (C2) and (C3), the linear operator $\cK: L^\infty(\Omega,\bbM)\rightarrow
  L^\infty(\Omega,\bbM)$ is a contraction with
  \begin{equation}\label{eq:cKLinf_ani}
    \normOperatorLpAni{\infty}{\cK} \leq C < 1.
  \end{equation}
  Assume $f\in L^\infty(\Omega)$ and let $\hat{g}(\bx,k)=f(\bx)\delta_{k,0}$, then
    \begin{equation}\label{eq:KfLinf_ani}
        \normLpOmegaAni{\infty}{\tilde{\cK}\hat{g}}\leq \tau\normLpOmega{\infty}{f}.
    \end{equation}
\end{lemma}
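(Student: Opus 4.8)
The plan is to follow the template of Lemma~\ref{lem:Linfcontraction}, replacing the scalar kernel estimate \eqref{eq:mu_inequality} by the matrix bound \eqref{eq:ani_Gnorm}. First I would dispose of the source term, which is the easy half. Since $\hat g(\bx,k)=f(\bx)\delta_{k,0}$ is supported on the single mode $k=0$, we have $\cF^{\dagger}\hat g(\bx,\theta)=f(\bx)=\cJ f$, independent of $\theta$; hence $(\cT^{-1}\cJ f)(\bx,\theta)=\int_0^{\tau(\bx,\bv)}E^{[\mu_t]}(\bx,\bv,s)f(\bx-s\bv)\dd s$, whose modulus is at most $\tau\normLpOmega{\infty}{f}$ pointwise in $\theta$ because $0<E^{[\mu_t]}\le1$. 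Taking Fourier coefficients, discarding the modes outside $\bbM$ (which only lowers the $\ell^2$ norm) and using Parseval, one gets for a.e. $\bx$ that $\sum_{k\in\bbM}|\tilde\cK\hat g(\bx,k)|^2\le\fint_0^{2\pi}|(\cT^{-1}\cJ f)(\bx,\theta)|^2\dd\theta\le\tau^2\normLpOmega{\infty}{f}^2$, which is \eqref{eq:KfLinf_ani}. This step needs no new idea precisely because the source is angularly isotropic.

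For the contraction I would factor $\cK=\tilde\cK\circ(\mu_s\hat\sigma\,\cdot)$ and record that multiplication by $\hat\sigma$ is a pointwise $\ell^2$-contraction: assumption (C3) gives $|\hat\sigma(\bx,k)|=|\fint_0^{2\pi}\sigma(\bx,\theta)e^{-\imag k\theta}\dd\theta|\le\fint_0^{2\pi}\sigma\dd\theta=\hat\sigma(\bx,0)=1$, so $\norm{\hat\sigma(\bx,\cdot)\odot\bz}_2\le\norm{\bz}_2$. Then, using the method of characteristics as in \eqref{eq:ani_derivation}, for a.e. $\bx$ and any $\bw$ with $\norm{\bw}_2=1$ I would write the bilinear form $\langle\bw,(\cK\hat\phi)(\bx,\cdot)\rangle=\fint_0^{2\pi}\int_0^{\tau}E^{[\mu_t]}(\bx,\bv,s)\mu_s(\bx-s\bv)\,\overline{W(\theta)}\,R(\bx-s\bv,\theta)\dd s\dd\theta$, where $W=\cF^{\dagger}\bw$ and $R(\by,\cdot)=\cF^{\dagger}(\hat\sigma(\by,\cdot)\odot\hat\phi(\by,\cdot))$, so that $\fint_0^{2\pi}|R(\by,\theta)|^2\dd\theta=\norm{\hat\sigma(\by,\cdot)\odot\hat\phi(\by,\cdot)}_2^2\le\normLpOmegaAni{\infty}{\hat\phi}^2$. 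Cauchy--Schwarz in the measure $\dd\nu=E^{[\mu_t]}\mu_s(\bx-s\bv)\dd s\dd\theta$ splits the form as $A\cdot B$, where $A^2=\fint_0^{2\pi}\int_0^{\tau}|W(\theta)|^2\dd\nu$ is exactly the quadratic form $\bw^{H}\bG\bw$ computed just before the lemma and is therefore $\le C$ by \eqref{eq:ani_Gnorm}, while $B^2=\fint_0^{2\pi}\int_0^{\tau}|R(\bx-s\bv,\theta)|^2\dd\nu$. If one can show $B^2\le C\,\normLpOmegaAni{\infty}{\hat\phi}^2$, then $\norm{(\cK\hat\phi)(\bx,\cdot)}_2\le C\,\normLpOmegaAni{\infty}{\hat\phi}$, which is \eqref{eq:cKLinf_ani}, and Banach's fixed point theorem closes Theorem~\ref{thm:ani_existence} exactly as in the isotropic case.

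The hard part will be the bound on $B^2$. The quantity $\normLpOmegaAni{\infty}{\hat\phi}$ controls only the angular $L^2$-average $\fint_0^{2\pi}|R(\by,\theta)|^2\dd\theta$ at each fixed $\by$, whereas the transport integral evaluates $R(\by,\theta)$ on the diagonal $\by=\bx-s\bv(\theta)$, i.e. at the single angle $\theta$ that equals the ray direction. The cancellation that makes $\bG$ a contraction is the averaging identity $\fint_0^{2\pi}\bigl(e^{\imag(k'-k)\theta}\bigr)_{k,k'}\dd\theta=\identity$; once $\hat\phi(\by,\cdot)$ is allowed to vary with $\by$, hence with $\theta$, along the characteristic, a direct Minkowski or pointwise-in-$\theta$ estimate destroys this averaging and costs a factor $\sqrt{\#\bbM}$ (by the Nikolskii inequality for the trigonometric polynomial $R(\by,\cdot)$). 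The delicate point is therefore to estimate $B^2$ while keeping the $\theta$-integration coherent: I would expand $|R|^2$ into its Fourier modes, recognise the spatially weighted double sum as a perturbation of $\bw^{H}\bG\bw$, and control the variation of $\mu_s(\by)\,\hat\sigma(\by,\cdot)\odot\hat\phi(\by,\cdot)$ along rays using $\normLpOmegaAni{\infty}{\hat\phi}$ together with the kernel mass bound $\int_\Omega K(\bx,\by)\dd\by\le C$. Making this reduction rigorous, rather than passing through the lossy scalar estimate of $\fint_0^{2\pi}|R|^2\dd\theta$, is the crux of the argument.
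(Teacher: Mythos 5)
Your first half is fine: bounding $\tilde{\cK}\hat{g}$ by noting $\cF^{\dagger}\hat{g}=\cJ f$, estimating $|\cT^{-1}\cJ f|\le \tau\normLpOmega{\infty}{f}$ pointwise, and then applying Bessel's inequality over the retained modes is correct, and in fact more careful than the paper's own treatment (the paper simply bounds each component $|(\tilde{\cK}\hat{g})(\bx,k)|$ by $\tilde{\cK}|f|\le\tau\normLpOmega{\infty}{f}$, without worrying about which vector norm on $\bbC^{\#\bbM}$ is in force).

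The second half, however, is not a proof. You reduce \eqref{eq:cKLinf_ani} by duality and Cauchy--Schwarz to the two factors $A^2=\bw^{H}\bG\bw\le C$ and $B^2=\fint_0^{2\pi}\int_0^{\tau}|R(\bx-s\bv,\theta)|^2\dd\nu$, and then you state explicitly that the required bound $B^2\le C\normLpOmegaAni{\infty}{\hat\phi}^2$ is the ``crux'' and leave it unresolved: the only concrete estimate available from your setup is $|R(\by,\theta)|\le\sqrt{\#\bbM}\,\norm{\hat\sigma(\by,\cdot)\odot\hat\phi(\by,\cdot)}_2$, which yields $B^2\le C\,\#\bbM\,\normLpOmegaAni{\infty}{\hat\phi}^2$ and hence a contraction constant $C\sqrt{\#\bbM}$ that can exceed $1$. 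The proposed remedy (expanding $|R|^2$ in Fourier modes and treating the coupled sum as a ``perturbation of $\bw^{H}\bG\bw$'') is a program, not an argument; nothing in the proposal controls the variation of $\hat\phi(\by,\cdot)$ along the characteristic, so the claimed inequality \eqref{eq:cKLinf_ani} is not established. For comparison, the paper does not attempt your duality scheme at all: it uses $|\hat\sigma(\bx,k)|\le 1$ from (C3), takes absolute values mode by mode, and invokes the scalar estimate \eqref{eq:mu_inequality} to write $|(\cK\hat\phi)(\bx,k)|\le\sum_{k'\in\bbM}G(k,k';\bx)\normLpOmega{\infty}{\phi(\cdot,k')}\le C\normLpOmegaAni{\infty}{\phi}$ --- a short componentwise bound that sidesteps the ray/angle coupling you (rightly) identify as delicate, at the cost of being loose about the passage from per-mode bounds to the $\ell^2$-per-point norm. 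So you have correctly diagnosed a real subtlety, but your write-up does not close it, and the lemma's main assertion remains unproved in your proposal.
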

\begin{proof}
  The condition (C3) indicates $|\hat{\sigma}(\bx,k)|\leq 1$,
  $k\in\bbM$.
  For any $\boldsymbol{\hat{\phi}}\in L^{\infty}(\Omega,\bbM)$, recalling
  \eqref{eq:ani_derivation} and $|\hat{\sigma}(\bx,k)|\leq 1$, we
  have 
  \begin{equation*}
    \begin{aligned}
      |(\cK\hat{\phi})(\bx,k)| &\leq
      \left| \fint_0^{2\pi}\int_0^{\tau(\bx,\bv)}E^{[\mu_t]}(\bx,\bv,s)
      \mu_s(\bx-s\bv)\sum_{k'\in\bbM}|\hat{\phi}(\bx-s\bv,k')|
      e^{\imag (k'-k)\theta}\dd s\dd\theta
      \right|\\
      &\leq\sum_{k'\in\bbM}G(k,k';\bx)
      \normLpOmega{\infty}{\phi(\bx,k')}\\
      &\leq C\normLpOmegaAni{\infty}{\phi}.
    \end{aligned}
  \end{equation*}
  Therefore, the operator $\cK$ is a contraction in $L^\infty(\Omega,\bbM)$ with
  $\normOperatorLpAni{p}{\cK}\leq C$.
  
  The definition of $\tilde{\cK}$ indicates
  \begin{equation*}
      |(\tilde{\cK}\hat{g})(\bx,k)| = \left|\int_{\Omega}\tilde{K}(\bx,\by)e^{-\imag k\theta}f(\by)\dd\by\right|
      \leq 
      \int_{\Omega}\tilde{K}(\bx,\by)|f(\by)|\dd\by
      =\tilde{\cK}|f| \leq\tau \normLpOmega{\infty}{f},
  \end{equation*}
  where \eqref{eq:KfLinf} is used in the last inequality.
\end{proof}

For the case $1\leq p<\infty$, notice that \eqref{eq:ani_2d_Fredholm} can be rewritten as $\hat{\Phi}
= \tilde{\cK}\left( \mu_s \hat{\sigma}\hat{\Phi}+\hat{g} \right)$ with $\hat{g}(\bx,k)=f(\bx)\delta_{k,0}$. 
By letting $\hat{\Psi}=\mu_s \hat{\sigma}\hat{\Phi}+\hat{g}$, the Fredholm equation \eqref{eq:ani_2d_Fredholm}
can be stated equivalently as 
\begin{equation}\label{eq:ani_Fredholm_w}
    \hat{\Psi}(\bx,k)=\mu_s\hat{\sigma}\tilde{\cK}\hat{\Psi}(\bx,k)+\hat{g}, \quad 
    \hat{\Phi}=\tilde{\cK}\hat{\Psi}.
\end{equation}
\begin{lemma}\label{lem:Lpcontranction_ani}
  Under assumptions (C1), (C2) and (C3), the linear operator $\mu_s\hat{\sigma}\tilde{\cK} :
  L^p(\Omega,\bbM)\rightarrow L^p(\Omega,\bbM)$, $1\leq p<\infty$ is a contraction with
  \begin{equation}\label{eq:cKLp_ani}
      \normOperatorLpAni{p}{\mu_s\hat{\sigma}\tilde{\cK}} \leq C,
  \end{equation}
  and 
  \begin{equation}\label{eq:uwLp_ani}
    \normLpOmegaAni{p}{\hat{\Phi}}\leq \tau\normLpOmegaAni{p}{\hat{\Psi}}.
  \end{equation}
\end{lemma}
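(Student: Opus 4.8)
The plan is to transcribe the proof of Lemma~\ref{lem:Lpcontranction} into the vector-valued setting of $L^p(\Omega,\bbM)$, using the matrix estimate \eqref{eq:ani_Gnorm} in place of the scalar estimate \eqref{eq:mu_inequality}. Two standing facts will be used throughout: assumption (C3) forces $|\hat{\sigma}(\bx,k)|\leq 1$ for every $k\in\bbM$ (exactly as in Lemma~\ref{lem:Linfcontraction_ani}), so that the multiplier $\hat{\sigma}$ may be discarded componentwise by an upper bound, reducing matters to controlling $\mu_s(\bx)\norm{(\tilde{\cK}\hat{\phi})(\bx)}_2$; and the Hermitian matrix $\bG(\bx)$ satisfies $\norm{\bG(\bx)}_2\leq C<1$ uniformly in $\bx$, with its diagonal entries therefore also bounded by $C$.

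To see why $\bG$ is the right object, I would test $\mu_s(\bx)(\tilde{\cK}\hat{\phi})(\bx)$ against a unit vector $\bw\in\bbC^{\#\bbM}$. Writing $W(\theta)=\sum_{k\in\bbM}w_k e^{\imag k\theta}$ and $P(\by,\theta)=\sum_{k'\in\bbM}\hat{\phi}(\by,k')e^{\imag k'\theta}$ and using the representation \eqref{eq:ani_derivation}, the pairing becomes
\begin{equation*}
  \bw^{H}\mu_s(\bx)(\tilde{\cK}\hat{\phi})(\bx)
  = \fint_0^{2\pi}\int_0^{\tau(\bx,\bv)} E^{[\mu_t]}(\bx,\bv,s)\,\mu_s(\bx)\,\overline{W(\theta)}\,P(\bx-s\bv,\theta)\dd s\dd\theta ,
\end{equation*}
where Parseval gives $\fint_0^{2\pi}|W|^2=\norm{\bw}_2^2$ and $\fint_0^{2\pi}|P(\by,\cdot)|^2=\norm{\hat{\phi}(\by)}_2^2$. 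The key observation is that if $P$ were frozen at the base point $\bx$, this pairing would collapse to precisely $\bw^{H}\bG(\bx)\hat{\phi}(\bx)$, so \eqref{eq:ani_Gnorm} would immediately yield $\norm{\mu_s(\bx)(\tilde{\cK}\hat{\phi})(\bx)}_2\leq C\norm{\hat{\phi}(\bx)}_2$. This is exactly why $\bG$ is the anisotropic replacement for the scalar weight $(\tilde{\cK}\mu_s)(\bx)$ of the isotropic proof.

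Since the argument is in fact evaluated at the running point $\bx-s\bv$, the remaining work is to transfer it back to the base point after raising to the power $p$ and integrating in $\bx$, exactly as the middle step of Lemma~\ref{lem:Lpcontranction}. Concretely, I would apply Jensen's inequality against the sub-probability weight $E^{[\mu_t]}(\bx,\bv,s)\mu_s(\bx)\dd s\,\dd\theta/(2\pi)$ (whose total mass is a diagonal entry of $\bG$, hence $\leq C$) to move the exponent $p$ inside the angular and path average, then use the shift relation \eqref{eq:relation_Emu} together with Fubini to swap the evaluation point $\bx-s\bv$ back to $\bx$, invoking $\norm{\bG(\bx)}_2\leq C$ once more on the transferred integral. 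Collecting the factor from Jensen and the factor from the transferred weight produces $\normLpOmegaAni{p}{\mu_s\hat{\sigma}\tilde{\cK}\hat{\phi}}^p\leq C^p\normLpOmegaAni{p}{\hat{\phi}}^p$, which is \eqref{eq:cKLp_ani}. For the second bound \eqref{eq:uwLp_ani}, recall from \eqref{eq:ani_Fredholm_w} that $\hat{\Phi}=\tilde{\cK}\hat{\Psi}$ carries no factor $\hat{\sigma}$; here Bessel's inequality over the orthonormal system $\{e^{\imag k\theta}\}_{k\in\bbM}$ collapses the $\ell^2$-sum over $\bbM$ into a single $\fint_0^{2\pi}\dd\theta$, after which $0\leq E^{[\mu_t]}\leq1$, the bound $\int_0^{\tau}\dd s\leq\tau$, and Jensen in $s$ reproduce the factor $\tau$ as in the last display of Lemma~\ref{lem:Lpcontranction}.

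I expect the genuine difficulty to be the coupling between the modes $k\in\bbM$. In the scalar isotropic proof the kernel weight is a number and one may freely insert absolute values before applying Jensen; here the off-diagonal phases $e^{\imag(k'-k)\theta}$ carry the cancellation that makes $\norm{\bG}_2\leq C$ rather than the much larger row sums, and a crude triangle-inequality or Minkowski bound would instead produce a spurious factor $\#\bbM$. The delicate point is therefore to keep the quadratic-form (duality) representation intact while simultaneously performing the Jensen step in the path variable and the base-to-running change of variables \eqref{eq:relation_Emu}, so that the quantity which finally emerges is controlled by the spectral norm of $\bG$ and not by its entrywise size; once the order of these operations is arranged, the remainder is a routine transcription of Lemma~\ref{lem:Lpcontranction}.
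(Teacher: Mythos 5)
Your proposal follows essentially the same route as the paper's proof: drop $\hat{\sigma}$ via $|\hat{\sigma}(\bx,k)|\leq 1$, reduce the remaining operator to multiplication by the matrix $\bG(\bx)$ whose spectral norm is controlled by \eqref{eq:ani_Gnorm} through the Parseval/quadratic-form computation, transfer the evaluation point from $\bx-s\bv$ back to $\bx$ exactly as in the isotropic Lemma~\ref{lem:Lpcontranction}, and obtain \eqref{eq:uwLp_ani} from $0\leq E^{[\mu_t]}\leq 1$ and the orthogonality of the modes. If anything, you are more explicit than the paper about why the spectral norm of $\bG$ (rather than an entrywise or row-sum bound) must be retained through the Jensen step, a point the paper compresses into a single terse equality.
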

\begin{proof}
  For any $\hat{\phi}\in L^p(\Omega,\bbM)$, since $|\hat{\sigma}(\bx,k)|\leq 1$, we have
  \begin{equation*}
    \begin{aligned}
        \normLpOmegaAni{p}{\mu_s\hat{\sigma}\tilde{\cK}\hat{\phi}} &\leq
        \normLpOmegaAni{p}{
            \mu_s(\bx)
            \sum_{k'\in\bbM}
            \fint_{0}^{2\pi}\int_0^{\tau(\bx,\bv)}
            E^{[\mu_t]}(\bx,\bv,s)e^{\imag
            (k'-k)\theta}\hat{\phi}(\bx-s\bv,k')\dd s\dd\bv}\\
            &=\normLpOmegaAni{p}{
                \sum_{k'\in\bbM}G(k,k';\bx)\hat{\phi}(\bx,k')}
            \leq C\normLpOmegaAni{p}{\hat{\phi}},
        \end{aligned}
    \end{equation*}
    where the relation \eqref{eq:ani_Gnorm} is used in the last inequality. Therefore,
    $\mu_s\hat{\sigma}\tilde{\cK}$ is a contraction in $L^p(\Omega,\bbM)$ and
    $\normOperatorLpAni{p}{\mu_s\hat{\sigma}\tilde{\cK}}\leq C$.

    The non-negativeness of $\mu_t$ indicates $0\leq E^{[\mu_t]}(\bx,\bv,s)\leq1$, which yields
    \begin{equation}
        \begin{aligned}
            \normLpOmegaAni{p}{\hat{\Phi}} 
            &=\normLpOmegaAni{p}{\sum_{k'\in\bbM}
            \fint_{0}^{2\pi}  \int_0^{\tau(\bx,\bv)} 
            E^{[\mu_t]}(\bx,\bv,s)
            e^{\imag(k'-k)\theta}\dd s\dd\bv\hat{\Psi}(\bx,k')}\\
            &\leq
            \normLpOmegaAni{p}{\sum_{k'\in\bbM}
            \fint_{0}^{2\pi}  \tau 
            e^{\imag(k'-k)\theta}\dd\bv\hat{\Psi}(\bx,k')}
            \leq \tau\normLpOmegaAni{p}{\hat{\Psi}}.
        \end{aligned}
    \end{equation}
\end{proof}
With these two lemmas, the Theorem \ref{thm:ani_existence} goes as follows.
\begin{proof}[Proof of Theorem \ref{thm:ani_existence}]
  This proof is similar to the proof of Theorem \ref{thm:existence}.  For the case $p=\infty$,
  noticing Lemma \ref{lem:Linfcontraction_ani}, and applying Banach's fixed point theorem
  \cite[Chapter 3, Theorem 3.2]{hunter2001applied}, we obtain that \eqref{eq:ani_2d_Fredholm} has a
  unique solution $\hat{\Phi}\in L^\infty(\Omega,\bbM)$ if $\cK\hat{g}\in L^\infty(\Omega,\bbM)$,
  which is guaranteed by \eqref{eq:KfLinf_ani}.
  
  For the case $1\leq p<\infty$, noticing Lemma \ref{lem:Lpcontranction_ani} and applying Banach's
  fixed point theorem, we obtain that \eqref{eq:ani_Fredholm_w} has a unique solution $\hat{\Psi}\in
  L^p(\Omega,\bbM)$, which indicates \eqref{eq:ani_2d_Fredholm} has a unique solution $\hat{\Phi}\in
  L^p(\Omega,\bbM)$.  The conclusion \eqref{eq:ani_unique} is a directly deduction of Lemma
  \ref{lem:Lpcontranction_ani}.  This completes the proof.
\end{proof}

By setting $p=2$ in Lemma \ref{lem:Lpcontranction_ani}, we can directly deduce the result.
\begin{corollary}\label{cor:spd_ani}
  Under assumptions (C1), (C2) and (C3), the linear operator $\cI-\cK:$
  $L^2(\Omega,\bbM)\rightarrow L^2(\Omega,\bbM)$ is positive definite.
\end{corollary}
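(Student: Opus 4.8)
The plan is to follow the template of Corollary \ref{cor:spd}, adapting it to the complex vector space $L^2(\Omega,\bbM)$. Since $\cK$ is no longer self-adjoint in the anisotropic setting, ``positive definite'' must be understood as coercivity of the Hermitian part, i.e. $\mathrm{Re}\,\langle(\cI-\cK)\hat\phi,\hat\phi\rangle>0$ for every nonzero $\hat\phi$. I would reduce the whole statement to a single operator-norm estimate: if one shows $\normOperatorLpAni{2}{\cK}\le C$, then by the Cauchy--Schwarz inequality
\[
  \mathrm{Re}\,\langle(\cI-\cK)\hat\phi,\hat\phi\rangle
  = \normLpOmegaAni{2}{\hat\phi}^2 - \mathrm{Re}\,\langle\cK\hat\phi,\hat\phi\rangle
  \ge \left(1-C\right)\normLpOmegaAni{2}{\hat\phi}^2 ,
\]
which is strictly positive because $C<1$ by \eqref{eq:ani_Gnorm}. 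So the task reduces to bounding $\normOperatorLpAni{2}{\cK}$.

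To obtain that bound, recall that $\cK=\tilde{\cK}\mu_s\hat{\sigma}$ and that the kernel $\tilde{K}$ is Hermitian, so $\tilde{\cK}$ is self-adjoint on $L^2(\Omega,\bbM)$. Writing $M$ for the (complex, diagonal-in-mode) multiplication operator $\hat\phi\mapsto\mu_s\hat{\sigma}\hat\phi$, we have $\cK=\tilde{\cK}M$ and hence $\cK^{*}=M^{*}\tilde{\cK}=\mu_s\overline{\hat{\sigma}}\,\tilde{\cK}$. The key observation is that an inspection of the proof of Lemma \ref{lem:Lpcontranction_ani} reveals that the only property of $\hat{\sigma}$ it uses is the pointwise bound $|\hat{\sigma}(\bx,k)|\le 1$ coming from (C3); after this bound is invoked the remaining phase factors are collected into the matrix $\bG(\bx)$ and controlled through $\norm{\bG}_2\le C$ in \eqref{eq:ani_Gnorm}. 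Consequently the same computation gives $\normOperatorLpAni{2}{\mu_s\rho\,\tilde{\cK}}\le C$ for \emph{any} multiplier $\rho$ with $|\rho(\bx,k)|\le 1$. Applying this with $\rho=\overline{\hat{\sigma}}$, which still satisfies $|\overline{\hat{\sigma}}|=|\hat{\sigma}|\le 1$, yields $\normOperatorLpAni{2}{\cK^{*}}=\normOperatorLpAni{2}{\mu_s\overline{\hat{\sigma}}\,\tilde{\cK}}\le C$, and since $\normOperatorLpAni{2}{\cK}=\normOperatorLpAni{2}{\cK^{*}}$ we conclude $\normOperatorLpAni{2}{\cK}\le C<1$, completing the reduction above. (Equivalently, one can argue directly as in Corollary \ref{cor:spd}: self-adjointness of $\tilde{\cK}$ gives $\langle\hat\phi,\cK\hat\phi\rangle=\langle\tilde{\cK}\hat\phi,\mu_s\hat{\sigma}\hat\phi\rangle=\langle\mu_s\overline{\hat{\sigma}}\,\tilde{\cK}\hat\phi,\hat\phi\rangle$, and then Cauchy--Schwarz together with the same norm bound finishes the estimate.)

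The main obstacle, and the one genuinely new feature compared with the isotropic case, is that the multiplication factor $\mu_s\hat{\sigma}$ is complex-valued and therefore not self-adjoint, so the isotropic identity $\langle\phi,\cK\phi\rangle=\langle\mu_s\tilde{\cK}\phi,\phi\rangle$ does not carry over verbatim: passing $M$ to the other slot of the Hermitian inner product forces the conjugate $\overline{\hat{\sigma}}$ to appear. The one thing that must be checked is that the contraction estimate is insensitive to this conjugation, and this is exactly where the structure of assumption (C3) is used --- it controls only the modulus $|\hat{\sigma}|$, so replacing $\hat{\sigma}$ by $\overline{\hat{\sigma}}$ leaves the $\bG$-based bound untouched. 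Everything else is a routine translation of the isotropic argument, and the strict inequality $C<1$ from \eqref{eq:ani_Gnorm} then delivers positive definiteness.
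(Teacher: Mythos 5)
Your proof is correct and follows essentially the same route as the paper: both reduce positive definiteness to the $p=2$ contraction bound of Lemma \ref{lem:Lpcontranction_ani} via the identity $\langle\hat\phi,\cK\hat\phi\rangle=\langle\tilde{\cK}\hat\phi,\mu_s\hat{\sigma}\hat\phi\rangle$ followed by Cauchy--Schwarz. The only difference is that the paper simply asserts that $\mu_s\hat{\sigma}$ is Hermitian (since $\sigma$ is real and even in $\theta$, its coefficients $\hat{\sigma}(\bx,k)$ are real), whereas you keep the conjugate $\overline{\hat{\sigma}}$ explicit and observe that the contraction estimate depends only on $|\hat{\sigma}|\le 1$ --- a slightly more careful treatment that also makes the intended meaning of ``positive definite'' (coercivity of the real part) explicit for the non-self-adjoint operator $\cK$.
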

\begin{proof}
    Since $\sigma$ is a real function, the operator $\mu_s\hat{\sigma}$ is a Hermitian operator.
    Noticing $\cK=\tilde{\cK}\mu_s\hat{\sigma}$, and $\tilde{\cK}$ is a Hermitian operator,
    for any $\phi\in L^2(\Omega,\bbM)$, we have
    \begin{equation*}
        \left\langle \phi, \cK\phi\right\rangle
        =\left\langle \tilde{\cK}\phi, \mu_s\hat{\sigma}\phi\right\rangle
        =\left\langle\mu_s\hat{\sigma}\tilde{\cK}\phi,\phi\right\rangle.
  \end{equation*}
  Using \eqref{eq:cKLp_ani} with $p=2$ and H{\"o}lder inequality, we can obtain 
  \begin{equation*}
    \left\langle \phi, \mu_s\tilde{\sigma}\tilde{\cK} \phi \right\rangle
    \leq C\normLpOmegaAni{2}{\phi}^2,
  \end{equation*}
  so 
  \begin{equation*}
      \left\langle 
      \phi, \left(\cI-\cK\right)\phi
      \right\rangle\leq (1-C)\normLpOmegaAni{2}{\phi}^2,
  \end{equation*}
  i.e. the operator $\cI-\cK$ is positive definite.
\end{proof}

%==========================================
\subsection{Fast algorithms}

In this subsection, we study the fast algorithms for the Fredholm equations
\eqref{eq:ani_2d_Fredholm} and \eqref{eq:ani_3d_Fredholm} and take the $2D$ case
\eqref{eq:ani_2d_Fredholm} as the example.  The framework of the algorithm can be split into 2
steps:
\begin{enumerate}
\item[T.1] Evaluate $(\tilde{\cK}f)(\bx,k)$;
\item[T.2] Given $\hat{r}(\bx,k)$, solve $\left( \cI-\cK
    \right)\hat{\Phi}(\bx,k)=\hat{r}(\bx,k)$, or equivalently $\hat{\Phi}(\bx,k) = \left(
  \cI-\cK \right)^{-1}\hat{r}(\bx,k)$.
\end{enumerate}
%The computation cost of the naive solver for these two steps are $O((MN)^2)$ and $O((MN)^3)$, where
%$N$ is the number of discrete points of $\Omega$ and $M=\#\bbM$. In the following, we focus on the
%fast algorithm on the two steps.

Following the discretization scheme used in the isotropic case, we introduce $u_{i,k}$ as an
approximation of $\hat{\Phi}(\bx_i,k)$, $\mu_{s,i}=\mu_s(\bx_i)$, and $f_i=f(\bx_i)$. The
discretization of anisotropic integral equations takes the form
\begin{equation}\label{eq:ani_discretization}
    u_{i,k} = \sum_{k'\in\bbM}\sum_{j=1}^NK_{i,k,j,k'} u_{j,k'}
    +\sum_{j=1}^NK_{i,k,j,0}\frac{f_j}{\mu_{s,j}}.
\end{equation}
For $K_{i,k,j,k'}$, where $i\not=j$, $K_{i,k,j,k'}$ is equal to the product of $K(\bx_i,k,\bx_j,k')$
and the cell volume. When $i=j$, since $K(\bx,k,\by,k')$ is singular at $\bx=\by$ the value
$K_{i,k,i,k'}$ is set to be integral of $K$ at the $i$-th cell. The treatments of the line integral
in $E(\bx,\by)$ and the integral of $K(\bx,\by)$ in a cell are the same as those of the isotropic case.

If one writes $u_{i,k}$ as a vector $\bu\in\bbC^{NM}$ and $K_{i,k,j,k'}$ into a matrix form
$\bK\in\bbC^{NM\times NM}$, then \eqref{eq:ani_discretization} can be written as
\begin{equation}\label{eq:ani_AU}
  \bA\bu=\bK\bdf,
\end{equation}
where $\bA=\bI-\bK$ and $\bdf$ is the vector form of $f_i\delta_{k,0}/\mu_{s,i}$.
Corollary \ref{cor:spd_ani} guarantees the matrix $\bA$ is positive definite.

Similar to the isotropic case, in order to solve \eqref{eq:ani_AU}, a direct method that factorizes
$\bA$ by LU or Cholesky decomposition takes $O((NM)^3)$ steps and in what follows we discuss two
fast algorithms for both homogeneous and inhomogeneous media.

\subsubsection{Homogeneous media and FFT-based algorithm}\label{sec:aniFFT}

For a homogeneous medium, the total transport coefficient $\mu_t$ and the scattering coefficient
$\mu_s$ are independent on $\bx$. As a result the kernel
\begin{equation}
    K(\bx,k,\by,k') = \frac{\mu_s\hat{\sigma}(k')}{|\bbS^{d-1}|}
    \frac{\exp\left(-\mu_t|\bx-\by|\right)}{|\bx-\by|}
    e^{\imag(k'-k)\theta}
    := \kappa(\bx-\by,k,k')
\end{equation}
with $\theta = \arccos\left( \frac{\bx-\by}{|\bx-\by|} \right)$ depends only on $\bx-\by$, $k$ and
$k'$. Hence the integral part
\begin{equation}
  C(\bx,k,k') := \int_{\Omega}\kappa(\bx-\by,k,k')\hat{g}(\by,k')\dd\by
\end{equation}
of $(\cK \hat{g})(\bx,k)$ is a convolution. Similar to the isotropic case, if the domain $\Omega$ is
rectangular, the FFT-based algorithm in Section \ref{sec:isoFFT} can be applied to the evaluation of
$C(\bx,k,k')$ for each $k,k'\in\bbM$.  Thus, in order to evaluate $(\cK\hat{g})(\bx,k)$, one just
needs to use the FFT-based algorithm in Section \ref{sec:isoFFT} to calculate $C(\bx,k,k')$ for each
$k,k'\in\bbM$ and then $(\cK\hat{g})(\bx,k)=\sum_{k'\in\bbM}G(\bx,k,k')$.  Once one has the algorithm
of applying $(\cK\hat{g})(\bx,k)$, Step T.2 can be solved by iterative method, for example
GMRES\cite{saad1986gmres}.

If one discretizes the domain uniformly by $n$ points in each direction, the computation cost to
evaluate $(\cK\hat{g})(\bx,k)$ is reduced to $O(M^2N\log(N))$ with $N=n^d$ and $M=\#\bbM$. Hence,
the computation costs of Steps T.1 and T.2 are $O(M^2N\log(N))$ and $(n_{iter}M^2N\log(N))$,
respectively, where $n_{iter}$ is the number of iteration steps in the iterative method.  So the
total computation cost is $O(n_{iter}M^2N\log(N))$. Before the evaluation of $(\cK\hat{g})(\bx,k)$,
one needs to calculate the Fourier modes of $\kappa(\bx,k,k')$ for each $k,k'\in\bbM$, so the
computation cost and storage memory of the precomputation are $O(M^2N\log(N))$ and $O(M^2N)$,
respectively.

We remark that the algorithm presented above can be directly applied on
\eqref{eq:ani_3d_Fredholm}. The only difference is $M$, which equals to $M=\sum_{l\in\bbM}(2l+1)$
for $3D$ case.

Another issue worth to remark is that the summation with repect to $k'$ in the evaluation of $\cK
\hat{g}$ can be also accelerated by FFT if $M$ is not small. The primary idea is that one can first
calculate $\hat{\sigma}\hat{g}$ and then the residue of the kernel $K(\bx,k,\by,k')$ only depends on
$\bx-\by$ and $k-k'$.

\subsubsection{Inhomogeneous media and RSF-based algorithm}\label{sec:aniRSF}

As is discussed in Section \ref{sec:isoRSF}, the FFT-based algorithm requires the total transport
coefficient $\mu_t$ be constant and the iterative method is used to solve Step T.2. For a
inhomogeneous medium, the RSF-based algorithm in Section \ref{sec:isoRSF} does not suffer from these
limitations. Here we apply RSF on the system \eqref{eq:ani_AU}.

One important issue is how to apply the RSF to the matrix $\bA$.  RSF needs to select the
skeletonization to approximate the factorization. A nature choice is for different Fourier modes at
the same position $\bx$ to use the same skeletonization. However, the different Fourier modes of
$\sigma$ may differ from others significantly. In this case, using the same skeletonization at each
position is not always the optimal choice. Here we regard {\em the pair of a position and a Fourier
  mode} as a {\em generalized point}, and select the skeletonization over such generalized
points. As a result, the Fourier modes selected at different points can vary.

As the size of the matrix is now $NM$, the precomputation (factorization) cost of RSF is $t_f$ in
\eqref{eq:time}, with $N$ replaced with $NM$. The computation cost of Steps T.1 and T.2 are both
$t_{a/s}$ in \eqref{eq:time} also with $N$ replaced with $NM$.

Finally, we remark that the RSF-based algorithm can be also directly applied on
\eqref{eq:ani_3d_Fredholm} without any difficulty.

%=============================
\subsection{Numerical results}

%\subsection{Anisotropic scattering case}
This subsection performs numerical simulations for the anisotropic scattering case and reports the
results for different $n$, different tolerance $\epsilon$, and also different choices of the
coefficients $\mu_s$ and $\mu_t$.

\subsubsection{Homogeneous media}
Here the scattering kernel $\sigma$ is chosen to be
\begin{equation}\label{eq:sigma}
  \sigma(\theta) = 1 + \frac{1}{5}e^{\imag\theta} + \frac{1}{5}e^{-\imag\theta},
\end{equation}
and accordingly $\bbM = \{-1,0,1\}$.  The absorption coefficient $\mu_a=\mu_t-\mu_s$ is always set
as $\mu_a=0.2$.
\paragraph{Computational cost}
\begin{table}[ht]
    \centering
    %\begin{tabular}{|c|c|c|c|c|c|c|}
    \begin{tabular}{rr|ccc|ccc}
        \hline \rule{0pt}{3ex}
        $n$   & $\epsilon$            &
        $T_{\pre}^{\DIR}$(sec) & $T_{\pre}^{\FFT}$(sec)  &
        $T_{\pre}^{\RSF}$(sec)  & $T_{\sol}^{\DIR}$(sec)  &
        $T_{\sol}^{\FFT}$(sec) & $T_{\sol}^{\RSF}$(sec)  \\ \hline
        32  & $1E-4$ & $2.42E+0$  & $1.33E-3$ &  $1.84E+0$ & $4.16E-2$ &  $2.05E-2$ & $3.32E-2$\\ 
        64  & $1E-4$ & $6.15E+1$  & $1.23E-2$ &  $1.69E+1$ & $6.28E-1$ &  $6.54E-2$ & $6.87E-2$\\ 
        128 & $1E-4$ &     --     & $7.84E-2$ &  $1.21E+2$ &    --     &  $1.31E-1$ & $3.17E-1$\\ 
        256 & $1E-4$ &     --     & $3.35E-1$ &  $8.24E+2$ &    --     &  $6.87E-1$ & $1.53E+0$\\
        \hline
        32  & $1E-6$ & $2.42E+0$  & $1.33E-3$ &  $3.57E+0$ & $4.16E-2$ &  $2.46E-2$ & $1.98E-2$\\ 
        64  & $1E-6$ & $6.15E+1$  & $1.23E-2$ &  $3.48E+1$ & $6.28E-1$ &  $6.99E-2$ & $1.11E-1$\\ 
        128 & $1E-6$ &   --       & $7.84E-2$ &  $3.08E+2$ &   --      &  $1.47E-1$ & $5.77E-1$\\ 
        256 & $1E-6$ &   --       & $3.35E-1$ &  $2.71E+3$ &   --      &  $1.20E+0$ & $3.29E+0$\\
        \hline
        32  & $1E-8$ & $2.42E+0$  & $1.33E-3$ &  $3.83E+0$ & $4.16E-2$ &  $2.67E-2$ & $2.60E-2$\\ 
        64  & $1E-8$ & $6.15E+1$  & $1.23E-2$ &  $5.93E+1$ & $6.28E-1$ &  $8.45E-2$ & $1.59E-1$\\ 
        128 & $1E-8$ &   --       & $7.84E-2$ &  $5.80E+2$ &   --      &  $1.69E-1$ & $8.62E-1$\\ 
        256 & $1E-8$ &   --       & $3.35E-1$ &  $4.88E+3$ &   --      &  $1.67E+0$ & $6.02E+0$\\ 
        \hline                       
    \end{tabular}
    \caption{\label{tab:ani_con_time}Anisotropic scattering case: computational cost of the three
      methods for different $n$ and $\epsilon$ with constant scattering coefficient $\mu_s=1$ and
      transport coefficient $\mu_t=1.2$.}
\end{table}
Similar to the isotropic case, we perform simulation for different mesh size $n=32$, $64$, $128$ and
$256$ and different tolerance $\epsilon$ as $1E-4$, $1E-6$ and $1E-8$, for the constant scattering
coefficient $\mu_s=1$ and $\mu_t=1.2$. The results are presented in Table \ref{tab:ani_con_time}
with the source term set to \eqref{eq:source1}. All the conclusions for the isotropic case hold for
this case. 

\paragraph{Relative error}
\begin{table}[ht]
  \centering
  \begin{tabular}{rr|cc|cc|cc}
    \hline \rule{0pt}{3ex}
    $n$ & $\epsilon$ & $\cE^{\FFT}_{1}$ & $\cE^{\RSF}_{1}$ &
    $\cE^{\FFT}_{2}$ & $\cE^{\RSF}_{2}$ &
    $\cE^{\FFT}_{3}$ & $\cE^{\RSF}_{3}$ \\\hline
    32  & $1E-4$ & $8.15E-6$ &  $2.49E-5$	&$2.18E-5$ &  $2.55E-5$&	$3.11E-5$ &  $2.19E-5$\\ 	
    64  & $1E-4$ & $8.30E-6$ &  $8.28E-5$	&$2.26E-5$ &  $8.34E-5$&	$3.18E-5$ &  $7.67E-5$\\	
    128 & $1E-4$ & $2.94E-4$ &  $3.36E-4$	&$2.28E-5$ &  $3.34E-4$&	$3.20E-5$ &  $2.65E-4$\\	
    256 & $1E-4$ & $2.95E-4$ &  $1.34E-3$   &$5.92E-4$ &  $1.23E-3$&    $3.22E-5$ &  $9.64E-4$\\
    \hline
    32  & $1E-6$ & $3.75E-7$ &  $3.20E-7$	&$7.57E-7$ &  $3.11E-7$&	$6.11E-8$ &  $2.89E-7$\\	
    64  & $1E-6$ & $3.89E-7$ &  $1.06E-6$	&$7.77E-7$ &  $1.13E-6$&	$9.44E-7$ &  $9.54E-7$\\	
    128 & $1E-6$ & $3.93E-7$ &  $3.58E-6$	&$7.78E-7$ &  $3.72E-6$&	$9.51E-7$ &  $3.15E-6$\\	
    256 & $1E-6$ & $3.94E-7$ &  $1.20E-5$   &$7.78E-7$ &  $1.16E-5$&    $9.52E-7$ &  $9.07E-6$\\
    \hline
    32  & $1E-8$ & $3.20E-10$ &  $4.36E-9$	&$1.28E-9$ &  $4.52E-9$&	$1.88E-9$ &  $4.00E-9$\\	
    64  & $1E-8$ & $8.36E-9$ &  $2.18E-8$	&$1.42E-9$ &  $2.29E-8$&	$1.99E-9$ &  $1.99E-8$\\	
    128 & $1E-8$ & $8.41E-9$ &  $5.44E-8$	&$1.45E-9$ &  $5.69E-8$&	$2.01E-9$ &  $4.69E-8$\\	
	256 & $1E-8$ & $8.41e-9$ &  $1.52E-7$   &$5.19E-8$ &  $1.53E-7$&    $6.07E-8$ &  $1.32E-7$\\	
    \hline
  \end{tabular}
  \caption{\label{tab:ani_con_err}Anisotropic scattering case: relative error of the FFT-based
    method and RSF-based method for three source term in \eqref{eq:source} with constant scattering
    coefficient $\mu_s=1$ and transport coefficient $\mu_t=1.2$. $\cE_k$ is the relative error for
    the source term $f_k$, $k=1,2,3$.}
\end{table}
Table \ref{tab:ani_con_err} presents the relative error of the FFT-based method and the RSF-based
method for three source terms \eqref{eq:source} with $\mu_s=1$ and $\mu_t=1.2$. The reference
solution is obtained with the direct method if $n=32$ and $64$ and with the FFT-based method with a
sufficiently small relative tolerance for GMRES if $n>64$. Both the methods behaviors well in terms
of the relative error.

\paragraph{Behavior for different scattering coefficients}
\begin{table}[ht]
  \centering
  \begin{tabular}{rr|cc|cc|cc}
    \hline \rule{0pt}{3ex}
    $\mu_s$ &   $\epsilon$ & $T_{\pre}^{\FFT}$ & $T_{\pre}^{\RSF}$ &
    $T_{\sol}^{\FFT}$ & $T_{\sol}^{\RSF}$ & $\cE^{\FFT}$ & $\cE^{\RSF}$ \\\hline
    1  & $1E-4$ & $1.32E-2$&  $1.52E+1$ &$5.12E-2$ & $6.45E-2$ & $8.30E-6$&	$1.02E-4$\\ 
    5  & $1E-4$ & $1.42E-2$&  $1.64E+1$ &$9.68E-2$ & $6.87E-2$ & $6.69E-5$&	$5.40E-4$\\ 
    10 & $1E-4$ & $1.35E-2$&  $1.84E+1$ &$9.10E-2$ & $6.76E-2$ & $4.94E-5$&	$1.35E-3$\\ 
    \hline
    1  & $1E-6$ & $1.31E-2$&  $3.43E+1$ &$6.30E-2$ & $1.10E-1$ & $3.89E-7$&	$1.11E-6$\\ 
    5  & $1E-6$ & $1.50E-2$&  $3.58E+1$ &$1.24E-1$ & $1.18E-1$ & $7.52E-8$&	$2.34E-6$\\ 
    10 & $1E-6$ & $3.37E-2$&  $3.84E+1$ &$1.51E-1$ & $1.12E-1$ & $1.31E-7$&	$3.44E-6$\\ 
    \hline
    1  & $1E-8$ & $1.34E-2$&  $5.83E+1$ &$6.50E-2$ & $1.51E-1$ & $8.36E-9$&	$2.28E-8$\\ 
    5  & $1E-8$ & $1.38E-2$&  $5.97E+1$ &$1.31E-1$ & $1.60E-1$ & $5.12E-9$&	$2.31E-8$\\ 
    10 & $1E-8$ & $1.36E-2$&  $5.99E+1$ &$1.39E-1$ & $1.68E-1$ & $2.64E-9$&	$3.84E-8$\\ 
    \hline
  \end{tabular}
  \caption{\label{tab:ani_con_mus}Anisotropic scattering case: computational cost and relative error
    for different scattering coefficient $\mu_s$ and $\epsilon$ with $n=128$ and $\mu_t=\mu_s+0.2$.}
\end{table}
Table \ref{tab:ani_con_mus} presents the computational cost and relative error for different 
scattering coefficient $\mu_s$ and tolerance $\epsilon$ for the source term \eqref{eq:source1}.
One can get the same conclusion as the isotropic case that both the FFT-based method and RSF-based
method work in both the diffusive regime and the transport regime.

\subsubsection{Inhomogeneous media}
We test the RSF-based method for the variable transport coefficient case with the absorption
coefficient to be $\mu_a=0.2$ and the scattering term to be
\begin{equation}\label{eq:sigma_vai}
    \mu_s(\bx)\sigma(\bx,\theta) = 
    1 + \left( 2+e^{\imag\theta}+e^{-\imag\theta}
    \right)\rho\exp\left( -\frac{|\bx-\bc|^2}{4} \right),\quad 
    \bc=\left( \frac{1}{2},\frac{1}{2} \right),
\end{equation}
where $\rho$ is a constant.

Table \ref{tab:ani_var_time} and Table \ref{tab:ani_var_err} present the computational cost and the
relative error of the RSF-based method for different mesh size $n$ and different tolerance
$\epsilon$ for the scattering term \eqref{eq:sigma_vai} with $\rho=1$, respectively. Table
\ref{tab:ani_var_mus} presents the computational cost and relative error for different scattering
coefficient $\mu_s$ and tolerance $\epsilon$ for the source term \eqref{eq:source1} with $n=64$. One
can see that all the conclusions for the isotropic case hold for the anisotropic case.

\begin{table}[ht]
  \centering
  \begin{tabular}{rr|cc|cc}
        \hline \rule{0pt}{3ex}
    $n$ & $\epsilon$ & 
    $T_{\pre}^{\DIR}$(sec) & $T_{\pre}^{\RSF}$(sec) & 
    $T_{\sol}^{\DIR}$(sec) & $T_{\sol}^{\RSF}$(sec) \\
    \hline
    32  & $1E-4$ & $1.84E+0$ & $4.26E+0$ & $4.15E-2$ & $ 1.27E-2$\\	
    64  & $1E-4$ & $7.58E+1$ & $4.02E+1$ & $6.24E-1$ & $ 5.68E-2$\\	
    128 & $1E-4$ &     --    & $2.43E+2$ &   --      & $ 2.95E-1$\\	
    256 & $1E-4$ &     --    & $1.11E+3$ &   --      & $ 1.72E+0$\\	
    \hline
    32  & $1E-6$ & $1.84E+0$ & $8.34E+0$ & $4.15E-2$ & $ 1.89E-2$\\	
    64  & $1E-6$ & $7.58E+1$ & $1.31E+2$ & $6.24E-1$ & $ 8.12E-2$\\	
    128 & $1E-6$ &     --    & $7.64E+2$ &   --      & $ 4.55E-1$\\	
    256 & $1E-6$ &     --    & $3.03E+3$ &   --      & $ 2.96E+0$\\	
    \hline
    32  & $1E-8$ & $1.84E+0$ & $1.01E+1$ & $4.15E-2$ & $ 2.65E-2$\\	
    64  & $1E-8$ & $7.58E+1$ & $1.86E+2$ & $6.24E-1$ & $ 1.46E-1$\\	
    128 & $1E-8$ &     --    & $1.68E+3$ &   --      & $ 7.16E-1$\\	
    256 & $1E-8$ &     --    & $5.02E+3$ &   --      & $ 4.36E+0$\\	
    \hline
  \end{tabular}
  \caption{\label{tab:ani_var_time}Anisotropic scattering case: computational cost of the direct
    method and RSF-based method for different $n$ and $\epsilon$ with variable scattering
    coefficient \eqref{eq:sigma_vai} with $\rho=1$ and $\mu_t=\mu_s+0.2$.}
\end{table}

%\paragraph{Relative error}
%Table \ref{tab:ani_var_err} presents the relative error for different mesh size $n$ and different
%tolerance $\epsilon$ with the scattering term \eqref{eq:sigma_vai} with $\rho=1$ and
%$\mu_t=\mu_s+0.2$. The reference solution is calculated by the direct method. The relative error
%behaviors similar as the constant isotropic scattering case.
\begin{table}[ht]
  \centering
  \begin{tabular}{rr|c|c|c}
        \hline \rule{0pt}{3ex}
    $n$ & $\epsilon$ & $\cE^{\RSF}_1$ & $\cE^{\RSF}_2$ & $\cE^{\RSF}_3$ \\
    \hline
    32 & $1E-4$ & $4.45E-5$ & $4.49E-5$ & $4.01E-5$\\ 
    64 & $1E-4$ & $1.16E-3$ & $9.95E-4$ & $8.94E-4$\\
    \hline
    32 & $1E-6$ & $2.27E-7$ & $2.02E-7$ & $1.65E-7$\\
    64 & $1E-6$ & $6.11E-6$ & $5.00E-6$ & $3.53E-6$\\
    \hline
    32 & $1E-8$ & $1.00E-9$ & $9.41E-10$ & $7.88E-10$\\
    64 & $1E-8$ & $1.58E-8$ & $1.51E-8$ & $1.28E-8$\\
    \hline
  \end{tabular}
  \caption{\label{tab:ani_var_err}Anisotropic scattering case: relative error of the RSF-based
    method for three source terms \eqref{eq:source} with variable scattering term
    \eqref{eq:sigma_vai} with $\rho=1$ and $\mu_t=\mu_s+0.2$. $\cE_k$ is the relative error for
    the source term $f_k$.}
\end{table}

%\paragraph{Behavior for different scattering coefficients}
%We study the dependence of the computational cost and accuracy of the methods on the scattering
%coefficient $\mu_s$ for the source term \eqref{eq:source1}.  As before, $\mu_a = \mu_t-\mu_s =
%0.2$. Simulations for $\rho=1,5,10$ in \eqref{eq:sigma_vai} are performed and the results are listed
%in Table \ref{tab:ani_var_mus}. The RSF-based method is quite robust for different scattering
%coefficient values. This indicates that both the two methods works in both diffusive regime and
%transport regime and the RSF-based method is not sensitive to the regimes.

\begin{table}[ht]
  \centering
  \begin{tabular}{rr|c|c|c}
        \hline \rule{0pt}{3ex}
    $\rho$ & $\epsilon$ & $T_{\pre}^{\RSF}$ 
    & $T_{\sol}^{\RSF}$ & $\cE^{\RSF}$\\\hline
    1  & $1E-4$ & $9.88E+1$ & $1.19E-1$ & $1.16E-03$\\	
    5  & $1E-4$ & $6.73E+1$ & $5.69E-2$ & $1.25E-03$\\	
    10 & $1E-4$ & $4.58E+1$ & $5.64E-2$ & $3.08E-04$\\	
    \hline
    1  & $1E-6$ & $1.02E+2$ & $1.16E-1$ & $6.11E-06$\\	
    5  & $1E-6$ & $1.09E+2$ & $9.95E-2$ & $9.84E-06$\\	
    10 & $1E-6$ & $1.08E+2$ & $9.10E-2$ & $5.53E-06$\\	
    \hline
    1  & $1E-8$ & $1.79E+2$ & $1.48E-1$ & $1.58E-08$\\	
    5  & $1E-8$ & $1.89E+2$ & $1.50E-1$ & $1.82E-08$\\	
    10 & $1E-8$ & $1.90E+2$ & $1.54E-1$ & $2.28E-08$\\	
    \hline
  \end{tabular}
  \caption{\label{tab:ani_var_mus} Anisotropic scattering case: computational cost and relative
    error for different $\rho$ in \eqref{eq:sigma_vai} and $\epsilon$ with $n=64$ and
    $\mu_t=\mu_s+0.2$.}
\end{table}

%These simulations show the FFT-based method is a good choice for homogeneous media but not works for
%inhomogeneous media. The RSF-based method works for almost all the cases and the applying time is
%comparable with the FFT-based method. Moreover, both the methods works for transport regimes and
%diffusive regimes.

%=============================
\section{Conclusion}\label{sec:conclusion}

In this paper, we discussed the integral equation formulations and fast algorithms for the RTE with
homogeneous and inhomogeneous media and for isotropic and anisotropic scattering. The primary observation is
that a smooth scattering term can be approximated with a short convolution on unit sphere.
Numerical simulations show both the FFT-based method and the RSF-based method work well. For a
homogeneous medium, the FFT-based method is the method of choice due to the efficiency of the FFT. For
an inhomogeneous medium, the RSF-based method requires less solution time. Moreover, numerical simulations
show that the RSF-based method can be used in both transport and diffusive regimes.

%For some cases that $\bA^{-1}$ is applied a lot of times for one factorization, for example in
%uncertainty quantity, the RSF-based method can be used to save a lot of time. Work in this direction
%is currently in progress.

To simplify the discussion, we have only studied the vacuum boundary condition and assumed that the
source term $f$ depends only on the spatial variable $\bx$. The proposed algorithm remains valid for
other boundary conditions and for anisotropic source terms $f(\bx,\bv)$. As the boundary condition
can be treated as a source term on the boundary, it is sufficient to just treat the anisotropic
source term. The only extra effort is the calculation $(\cF\cT^{-1}f)(\bx,k)$ and this can be
treated by solving
\begin{equation}
  \begin{aligned}
    \bv\cdot\nabla_{\bx}\Phi(\bx,\bv)+\mu_t(\bx)\Phi(\bx,\bv) &=f(\bx,\bv),\quad \text{ in }
    \Omega\times \bbS^{d-1},\\
    \Phi(\bx,\bv) &= 0,\quad \text{ on } \Gamma_{-},
  \end{aligned}
\end{equation}
which can be solved by existing methods, for example in \cite{gao2009fast,grella2011sparse}.

\section*{Acknowledgments}

The work of Y.F. and L.Y. is supported in part by the U.S. Department of Energy’s Advanced
Scientific Computing Research program under award DE-FC02-13ER26134/DE-SC0009409 and the National
Science Foundation under award DMS-1521830. The authors thank Kui Ren and Yimin Zhong for
constructive discussions.

\bibliographystyle{plain} \bibliography{rte.bib}
\end{document}